\documentclass{amsart}
\usepackage{enumerate}
\usepackage{colortbl}
\newtheorem{theorem}{Theorem}[section]
\newtheorem{lemma}[theorem]{Lemma}
\theoremstyle{definition}
\newtheorem{definition}[theorem]{Definition}
\newtheorem{example}[theorem]{Example}

\newtheorem{corollary}[theorem]{Corollary}

\theoremstyle{remark}

\numberwithin{equation}{section}

\begin{document}

\def\C{\mathbb C}
\def\R{\mathbb R}
\def\X{\mathbb X}
\def\Z{\mathbb Z}
\def\Y{\mathbb Y}
\def\Z{\mathbb Z}
\def\N{\mathbb N}
\def\cal{\mathcal}

\def\cal{\mathcal}
\def\b{\mathcal B}
\def\c{\mathcal C}
\def\cc{\mathbb C}
\def\x{\mathbb X}
\def\r{\mathbb R}
\def\uu{(U(t,s))_{t\ge s}}
\def\vv{(V(t,s))_{t\ge s}}
\def\xx{(X(t,s))_{t\ge s}}
\def\yy{(Y(t,s))_{t\ge s}}
\def\zz{(Z(t,s))_{t\ge s}}
\def\ss{(S(t))_{t\ge 0}}
\def\tt{(T(t,s))_{t\ge s}}
\def\rr{(R(t))_{t\ge 0}}

\title[Fractional Differential Equations]{
A Simple Spectral Theory of Polynomially Bounded Solutions and Applications to Differential Equations}

\author{Vu Trong Luong}
\address{VNU University of Education, Vietnam National University, Hanoi; 144 Xuan Thuy, Cau Giay, Hanoi, Vietnam}
\email{vutrongluong@gmail.com}

\author{Nguyen Van Minh}

\address{Department of Mathematics and Statistics, University of Arkansas at Little Rock, 2801 S University Ave, Little Rock, AR 72204. USA}
\email{mvnguyen1@ualr.edu}

\thanks{This paper is supported by the grant B2019-TTB-01 of the Ministry of Education and Training of Vietnam}
\thanks{The authors are grateful to the anonymous referee for his carefully reading the manuscript and useful suggestions.}
\date{\today}
\subjclass[2010]{Primary: 34K37, 34G10; Secondary: 34K30, 45J05}
\keywords{ Asymptotic behavior, polynomial boundedness, stability, spectrum of a function on the half line}

\begin{abstract} In this paper we present a simple spectral theory of polynomially bounded functions on the half line, and then apply it to study the asymptotic behavior of solutions of fractional differential equations of the form $D^{\alpha}_Cu(t)=Au(t)+f(t)$, where $D^{\alpha}_Cu(t)$ is the derivative of the function $u$ in Caputo's sense, $A$ is generally an unbounded closed operator, $f$ is polynomially bounded. Our main result claims that if $u$ is a mild solution of the Cauchy problem such that $\lim_{h\downarrow 0} \sup_{t\ge 0} \| u(t+h)-u(t)\|/(1+t)^n=0$, and $\sup_{t\ge 0} \| u(t)\| /(1+t)^n <\infty$, then, $\lim_{t\to\infty} u(t)/(1+t)^n  =0$ provided that the spectral set
$\Sigma (A,\alpha )\cap i\R$ is countable, where $\Sigma (A,\alpha )$ is defined to be the set of complex numbers $\xi$ such that $\lambda^{\alpha -1} (\lambda^\alpha -A)^{-1}$ is analytic in a neighborhood of $\xi$, and $u$ satisfies some ergodic conditions with zero means. The obtained result extends known results on strong stability of solutions to fractional equations.
\end{abstract}
\maketitle

\begin{center}
{\it Dedicated to the 75th anniversary of the birthday of Professor Toshiki Naito}
\end{center}

\section{Introduction} \label{section 1}
In this paper we deal with the asymptotic behavior of solutions to linear fractional differential equations the form
\begin{equation}\label{eq:0}
D^{\alpha}_Cu(t)=Au(t)+f(t), u(0)=x, 0<\alpha\le1,
\end{equation}
where $D^{\alpha}_Cu(t)$ is the derivative of the function $u$ in the Caputo's sense. 

\medskip
In recent decades fractional differential equations are of increasing interests to many researchers as this kind of equations allows us to model complex processes. In the model using these equations one can take into account of nonlocal relations in space as well as in time. Due to these properties fractional differential equations have been applied extensively in engineering. We refer the reader to the monographs \cite{hil,kilsritru} for an account of applications in Physics and Engineering. For general results and concepts in abstract spaces the reader is referred to \cite{baemeenan,baz,clegrilon}. In recent years the asymptotic behavior of mild solutions of fractional differential equations are extensively studied. Among many results we would like to mention \cite{cue,keylizwar,lizngu,lizngu2} that deal with existence, uniqueness of mild solutions as well as their asymptotic behavior. In this short paper we would like to extend a famous result on stability of $C_0$-semigroups due to Sklyar-Shirman \cite{sklshi} in the bounded case, and Arendt-Batty \cite{arebat}, Lyubich-Vu \cite{lyuvu} in the general unbounded case. Many extensions of this result (that is now referred to as ABLV Theorem) are given in \cite{arebathieneu,batengprusch,chitom1,nee,batengprusch,pau}, see also \cite{hinnaiminshi,min,minngusie} for related results on applications of spectral theory of functions to the study of the asymptotic behavior of solutions. One of interesting directions of extension of this theorem is  individual versions for stability of bounded mild solutions of evolution equations of the form $u'(t)=Au(t)+f(t), t\ge 0$ (see e.g. \cite{batneerab1,batneerab2}, \cite[Theorem 5.3.6]{nee}). Further, stability with weight of individual orbits of $C_0$-semigroups (or more general objects as representations) is also studied in \cite{batyea,baspry} with general conditions on weights.
In this direction, a concept of spectrum of a bounded function on the half line is introduced based on its Laplace transform (that is defined to be the subset of the imaginary axe in the complex plane where the Laplace transform of the function has no analytic extension through). 

\medskip
In this paper we choose a simple approach that is based on the analysis of the set of solutions of ordinary differential equations $y'=\lambda y +g(t)$ in order to examine the resolvent $R(\lambda , \tilde{\cal D})$ of the operator induced by the differentiation operator $d/dt$ in a quotient space. This allows us to define the spectrum of a polynomially bounded function $g$ on the half line. Our main result is Theorem \ref{the main} that is an extension of the ABLV Theorem for polynomially bounded mild solutions and for fractional evolution equations in Banach spaces. When Eq.(\ref{eq:0}) is homogeneous (that is, $f=0$), and the homogeneous equation is well posed with $\alpha =1$ our results have some overlaps with some results obtained in \cite{batyea,baspry}. Otherwise, to our best knowledge, the obtained results of this paper is new, in addition to its simple approach via the differentiation operator $d/dt$.

\section{Preliminaries and Notations}
Throughout this paper we will denote by $\R,\R^+, \C$ the real line $(-\infty,\infty)$, half line $[0,+\infty )$ and the complex plane. For $J$ being either $\R$ or $\R^+$, the notation $BUC(J,\X)$ stands for the function space of all bounded and uniformly continuous functions taking values in a (complex) Banach space $\X$ with sup-norm. Below we denote
 $$g_\alpha(t)=\frac{t^{\alpha-1}}{\Gamma(\alpha)}, t>0,\alpha>0.$$
For a complex number $z$, $\Re z$ denotes its real part. In this paper the single valued power function $\lambda^\alpha$ of the complex variable $\lambda$ is uniquely defined as
$\lambda^\alpha =| \lambda | ^\alpha e^{ i \alpha\ arg (\lambda)}$, with $-\pi < arg (\lambda) < \pi$.

\subsection{Fractional differentiation in Caputo's sense}

Let $\alpha >0, t\ge a,$ and $a$ is a fixed number. Then, the fractional operator
\begin{align}
J^{\alpha}_a u(t):=(g_\alpha \ast u)(t)=\displaystyle\int_{a}^{t}g_\alpha(t-\tau)u(\tau)d\tau
\end{align}
is called fractional Riemann-Liouville integral of degree $\alpha$. The function
\begin{equation*}
D^{\alpha}_{C}u(t):=
\begin{cases}
J^{n-\alpha}u^{(n)}(t)=\dfrac{1}{\Gamma(n-\alpha)}\displaystyle\int_{a}^t\dfrac{u^{(n)}(\tau)}{(t-\tau)^{\alpha+1-n}}d\tau, &{n-1}<{\alpha}<n\in\mathbb{N},\\
u^{(n)}(t),&\alpha=n\in\mathbb{N},
\end{cases}
\end{equation*}
 is called the fractional derivative in Caputo's sense of degree $\alpha$. By this notation we have for $0<\alpha \le 1$
 $$J^\alpha _a D^{\alpha}_C u(t)=u(t)-u(a).$$

\subsection{Cauchy Problem}
For a fixed $0<\alpha \le 1,$ consider the Cauchy problem
\begin{equation}\label{eq:1}
D^{\alpha}_Cu(t)=Au(t), u(0)=x, 
\end{equation}
where $A$ is generally an unbounded linear operator.

\medskip
The well-posedness of \eqref{eq:1}  is equivalent to that of the problem
\begin{equation}\label{eq:4}
u(t)=x+\int_0^tg_\alpha(t-s)Au(s)ds.
\end{equation}
The reader is referred to the monograph \cite{pru} for an extensive study of the well-posedness of this kind of equations when $A$ is generally an unbounded operator. Recent extensions for more general equations can be found in \cite{keylizwar} and their references for more details.

\medskip
Let us consider inhomogeneous linear equations of the form
\begin{equation}\label{eq:2a}
D^{\alpha}_Cu(t)=Au(t)+f(t), t\geq 0.
\end{equation}

\begin{definition}
A mild solution $u$ of Eq.(\ref{eq:2a}) on $\R^+$ is a continuous function on $\R^+$ such that, for each $t\in\R^+$, $J^\alpha u(t) \in D(A)$ and
$$u(t)=AJ^\alpha u(t)+J^\alpha f(t)+u(0).$$
\end{definition}

For a fixed integer $n  \ge 0$ we will use $BC_n  (\R^+,\X)$ to denote the space of all continuous function on $\R^+$ with values in $\X$ such that 
\begin{equation}\label{2.2}
\sup_{t\in \R^+} \frac{\| f(t)\|}{(1+t)^n } <\infty ,
\end{equation}
and the norm of an element $f\in BC_n  (\R^+,\X)$ is defined to be (\ref{2.2}). Every function satisfying (\ref{2.2}) is called $n $-bounded.
It is easy to see that $BC_n (\R^+,\X)$ with norm
\begin{equation}
\| f\|_n  := \sup_{t\in \R^+} \frac{\| f(t)\|}{(1+t)^n } 
\end{equation}
becomes a normed space.

\begin{definition}
We say that a function $f:\R^+ \to \X$ is $n $-uniformly continuous if it is continuous and
\begin{equation}
\lim_{h\downarrow 0} \sup_{t\in\R^+} \frac{\| f(t+h)-f(t)\|} {(1+t)^n  } =0.
\end{equation}
\end{definition}
We denote by $BUC_n  (\R^+,\X)$ as the part of $BC_n  (\R^+,\X)$ consisting of all $n $-uniformly continuous functions from $\R^+$ to $\X$.

\begin{lemma}
The normed space $(BUC_n  (\R^+,\X),\| \cdot \|_n  )$ is complete, so it is a Banach space.
\end{lemma}
\begin{proof}
We will make use of the fact that is widely known in the literature that the function space $BC(\R^+,\X)$ with sup-norm is a Banach space. Therefore, if $\{f_k\}_{k=1}^\infty$ is a Cauchy sequence in $BUC_n  (\R,\X)$, then the sequence $\{g_k\}_{k=1}^\infty$, where $g_k(t):= f_k(t)/(1+t)^n $, is a Cauchy sequence in $BC(\R^+,\X)$, so it is convergent to a function $g\in BC(\R^+,\X)$. We are going to show that if $f$ is defined as $f(t)=(1+t)^n  g(t)$, then $f\in BUC_n  (\R^+,\X)$ and $f$ is the limit of $\{f_k\}_{k=1}^\infty $. Indeed, by assumption,
\begin{eqnarray}\label{2.4}
\lim_{k\to\infty} \| f_k -f\|_n &=& \lim_{n\to\infty} \sup_{t\in\R^+} \| \frac{f_n(t)-f(t)}{(1+t)^n  }\| \nonumber \\
&=& \lim_{k\to\infty} \sup_{t\in\R^+} \| g_k(t)-g(t)\|  \nonumber  \\
&=& \lim_{k\to\infty}\| g_k-g\|  \nonumber  \\
&=& 0.
\end{eqnarray}
This yields that
$$
\sup_{t\in\R^+}\frac{ \| f(t)\|}{(1+t)^n } <\infty .
$$
Next, we will show that $f$ is $n $-uniformly continuous. In fact, for each $k\in\N$, we have
\begin{eqnarray*}
  \frac{\| f(t+h)-f(t)\|}{(1+t)^n } \le \frac{\| f(t+h)-f_k(t+h)\|}{(1+t)^n  } + \frac{\| f_k(t+h)-f_k(t)\|}{(1+t)^n }  + \frac{\| f_k(t)-f(t)\|}{(1+t)^n }.
\end{eqnarray*}
For every given $\epsilon >0$ we can find a (fixed) sufficiently large $N$ such that 
$$
 \sup_{t\in\R^+}\frac{\| f_N(t)-f(t)\|}{(1+t)^n } < \frac{\epsilon}{6} .
 $$
By the $n $-uniformness of $f_N$ there exists a positive $\delta$ such that if $0<h<\delta$, then
$$
\sup_{t\in\R^+}\frac{\| f_N(t+h)-f_N(t)\|}{(1+t)^n } <\frac{\epsilon}{6}.
$$
Next, for $0<h<\delta$
\begin{eqnarray*}
\frac{\| f(t+h)-f_N(t+h)\|}{(1+t)^n  }  &=& \frac{\| f(t+h)-f_N(t+h)\|} {(1+t+h)^n }\cdot \frac{(1+t+h)^n } {(1+t)^n  } \\
&\le& \sup_{t\in\R^+} \frac{\| f(t)-f_N(t)\|} {(1+t)^n }\cdot \frac{(1+t+\delta )^n } {(1+t)^n  } \\
&=& \sup_{t\in\R^+} \frac{\| f(t)-f_N(t)\|} {(1+t)^n }\cdot (1+\delta)^n  \\
&\le& \frac{\epsilon}{6} (1+\delta)^n  .
\end{eqnarray*}
If we choose $\delta$ sufficiently small, say $\delta <\delta_0:= 2^{1/n }-1$, then, $(1+\delta )^n  < 2$, so
 for $0<h<\delta$,
\begin{equation}
\sup_{t\in\R^+} \frac{\| f(t+h)-f_N(t+h)\|}{(1+t)^n  }  <\epsilon .
\end{equation}
This yields $f\in BUC_n (\R^+,\X)$ and by (\ref{2.4})  it is the limit of $\{f_n\}_{n=1}^\infty$. The lemma is proved.
\end{proof}

\begin{example}\label{exa 1}
Let $f\in BC_n  (\R^+,\X)$. If its derivative $f'$ is also an element of $BC_n  (\R^+,\X)$, then, $f\in BUC_n  (\R^+,\X)$.
\end{example}
\begin{proof}
For all $t\in \R^+$ and $h>0$, we have
\begin{eqnarray*}
\lim_{h\downarrow 0} \sup_{t\in \R^+} \frac{\| f(t+h)-f(t)\| }{(1+t)^n }&\le& 
\lim_{h\downarrow 0} \sup_{t\in \R^+}\frac{\sup_{t\le \xi \le t+h} \| f'(\xi )\| \cdot h}{(1+t)^n }  \\
\end{eqnarray*}
Note that by the $n $-boundedness of $f'$,
\begin{eqnarray*}
\frac{\sup_{t\le \xi \le t+h} \| f'(\xi )\| }{(1+t)^n }  &=& \frac{(1+t+h)^n }{(1+t)^n }\frac{\sup_{t\le \xi \le t+h} \| f'(\xi )\| }{(1+t+h)^n } \\
&\le& \frac{(1+t+h)^n }{(1+t)^n } \sup_{t\le \xi \le t+h}\frac{ \| f'(\xi )\| }{(1+\xi )^n } \\
&=&( 1+h)^n  \| f'\|_n  .
\end{eqnarray*}
Therefore,
\begin{eqnarray*}
\lim_{h\downarrow 0} \sup_{t\in \R^+}\frac{\sup_{t\le \xi \le t+h} \| f'(\xi )\| }{(1+t)^n } \cdot h 
= \lim_{h\downarrow 0} h( 1+h)^n  \| f'\|_n  =0.
\end{eqnarray*}
This completes the proof of the example's claim.
\end{proof}

\section{A Spectral Theory of Polynomially bounded Functions}
We note that for every function $f\in BUC_n  (\R^+,\X)$ its Laplace transform 
$$
{\cal L} f (\lambda ) := \int^\infty _0 e^{-\lambda t}f(t)dt 
$$
 exists for any $\Re \lambda > 0$, so the definition of the spectrum $Sp_+(f)$  as the set of all reals $\xi_0$ such that its Laplace transform has no analytic extension to any neighborhood of $i\xi_0$ as in \cite{arebat} can be {\bf formally} extended to $f\in BUC_n(\R^+,\X)$. The problem is how this spectrum can control the asymptotic behavior of the function $f$ on the half line $\R^+$ is not clear due to the unboundedness of polynomially bounded functions $f$. In what follows we will discuss an approach to the concept of spectrum of $f$ and how under some further "ergodic" conditions it controls the behavior of the functions $f\in BUC_n(\R^+,\X)$. We will begin with the translation semigroup $(S(t)_{t\ge 0})$ in $BUC_n(\R^+,\X)$, i.e., $S(t)f:= f(t+\cdot )$ for each $f\in BUC_n(\R^+,\X)$.

\begin{lemma}\label{lem 3.1}
For each $t\ge 0$, we have
\begin{eqnarray}
\| S(t) \| _n 
&\le& (1+t)^n   .
\end{eqnarray}
\end{lemma}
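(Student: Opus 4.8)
The plan is to estimate the operator norm of $S(t)$ directly from its definition as a translation on the space $BUC_n(\R^+,\X)$. Recall that $\|S(t)\|_n = \sup_{f\neq 0} \|S(t)f\|_n / \|f\|_n$, and $S(t)f = f(t+\cdot)$, so
\begin{equation*}
\|S(t)f\|_n = \sup_{s\in\R^+} \frac{\|f(t+s)\|}{(1+s)^n}.
\end{equation*}

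First I would rewrite the quotient $\|f(t+s)\|/(1+s)^n$ by inserting the natural weight at the point $t+s$, namely
\begin{equation*}
\frac{\|f(t+s)\|}{(1+s)^n} = \frac{\|f(t+s)\|}{(1+t+s)^n}\cdot\frac{(1+t+s)^n}{(1+s)^n}.
\end{equation*}
The first factor is bounded above by $\|f\|_n$ for every $s\ge 0$, by the very definition of the norm on $BUC_n(\R^+,\X)$. For the second factor I would observe that $1+t+s \le (1+t)(1+s)$ for all $s,t\ge 0$, since expanding the right-hand side gives $1 + t + s + ts \ge 1 + t + s$. Hence $(1+t+s)^n/(1+s)^n \le (1+t)^n$, uniformly in $s$.

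Combining these two bounds yields $\|f(t+s)\|/(1+s)^n \le (1+t)^n \|f\|_n$ for all $s\ge 0$, and taking the supremum over $s$ gives $\|S(t)f\|_n \le (1+t)^n \|f\|_n$; dividing by $\|f\|_n$ and taking the supremum over nonzero $f$ produces $\|S(t)\|_n \le (1+t)^n$, which is the claim. There is no real obstacle here — the only mild subtlety is choosing the right way to compare $(1+t+s)$ with the product $(1+t)(1+s)$ so that the supremum over $s$ can be taken cleanly, rather than attempting a pointwise estimate that still depends on $s$. One should also note in passing that $S(t)$ indeed maps $BUC_n(\R^+,\X)$ into itself, which follows from the same product inequality applied to the modulus of continuity, so the operator norm is well defined.
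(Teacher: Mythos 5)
Your proof is correct and follows essentially the same route as the paper: the same factorization of $\|f(t+s)\|/(1+s)^n$ through the weight $(1+t+s)^n$, with your inequality $1+t+s\le(1+t)(1+s)$ being just a rewriting of the paper's bound $\sup_{s\ge 0}\bigl(1+\tfrac{t}{1+s}\bigr)^n\le(1+t)^n$. Nothing further is needed.
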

 \begin{proof}
 For each $f\in BUC_n  (\R^+,\X)$ we have
 \begin{eqnarray*}
\| S(t)f \| _n  &=& \sup_{s\ge 0}   \frac{\| f(t+s)\|}{(1+s)^n }       \\
&=& \sup_{s\ge 0} \left(    \frac{\| f(t+s)\|}{[1+(t+s)]^n }   \cdot  \frac{[1+(t+s)]^n  }{(1+s)^n }  \right)  \\
&\le & \| f\| _n   \cdot     \sup_{s\ge 0}   \frac{[1+(t+s)]^n  }{(1+s)^n }\\
&=& \| f\| _n   \cdot   \sup_{s\ge 0} \left( 1+\frac{t}{1+s} \right)^n  \\
&\le & (1+t)^n  \| f\|_n  .
\end{eqnarray*}
 \end{proof}

Let $\cal D$ denote the differentiation operator $d/dt$ in $BUC_n  (\R^+,\X)$ with domain
$$
D(\cal D) =\{ f\in BUC_n  (\R^+,\X): \exists f', \ f'\in BUC_n  (\R,\X)\} .
$$

 \begin{lemma}
 The following assertions are valid:
 \begin{enumerate}
  \item The translation semigroup $(S(t)_{t\ge 0})$ in $BUC_n  (\R^+,\X)$ is strongly continuous;
 \item
 The infinitesimal generator $\cal G$ of $(S(t)_{t\ge 0})$ is the differentiation operator ${\cal D}$ in $BUC_n  (\R^+,\X)$.  \end{enumerate}
 \end{lemma}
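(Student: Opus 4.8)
The plan is to prove the two assertions in turn, since the second one largely relies on the first together with the standard relation between a $C_0$-semigroup and its generator.

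\textbf{Strong continuity of $(S(t))_{t\ge 0}$.} First I would fix $f\in BUC_n(\R^+,\X)$ and show $\|S(h)f - f\|_n \to 0$ as $h\downarrow 0$. Writing out the norm,
\[
\|S(h)f-f\|_n = \sup_{t\ge 0}\frac{\|f(t+h)-f(t)\|}{(1+t)^n},
\]
and this tends to $0$ as $h\downarrow 0$ precisely because $f$ is $n$-uniformly continuous — that is the defining property of $BUC_n(\R^+,\X)$. This gives right-continuity at $0$. For continuity at an arbitrary $t_0>0$ one writes $S(t_0+h)f - S(t_0)f = S(t_0)(S(h)f-f)$ when $h\ge 0$, and uses the operator bound $\|S(t_0)\|_n \le (1+t_0)^n$ from Lemma \ref{lem 3.1} to get $\|S(t_0+h)f-S(t_0)f\|_n \le (1+t_0)^n\|S(h)f-f\|_n \to 0$; for the left limit, write $S(t_0)f - S(t_0-h)f = S(t_0-h)(S(h)f-f)$ with $0<h<t_0$ and bound $\|S(t_0-h)\|_n \le (1+t_0)^n$ uniformly. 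Since $h\mapsto S(h)f-f$ is controlled uniformly in these estimates, strong continuity follows. I do not expect any real obstacle here; it is a routine semigroup argument once $n$-uniform continuity is in hand, and the only mild subtlety is making sure the operator-norm bounds are uniform on the relevant $h$-interval, which Lemma \ref{lem 3.1} supplies.

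\textbf{Identifying the generator.} Let $\cal G$ be the infinitesimal generator of $(S(t))_{t\ge 0}$. I would show $\cal G = \cal D$ by proving both inclusions of graphs. For $\cal D \subseteq \cal G$: take $f\in D(\cal D)$, so $f'$ exists and $f'\in BUC_n(\R^+,\X)$; then for each $t$ one has $\frac{f(t+h)-f(t)}{h} = \frac{1}{h}\int_0^h f'(t+s)\,ds$, so
\[
\left\|\frac{S(h)f-f}{h} - f'\right\|_n
= \sup_{t\ge 0}\frac{1}{(1+t)^n}\left\|\frac{1}{h}\int_0^h \big(f'(t+s)-f'(t)\big)\,ds\right\|
\le \sup_{0<s<h}\|S(s)f'-f'\|_n,
\]
after the same $(1+t+s)^n/(1+t)^n \le (1+h)^n$ comparison used in Example \ref{exa 1}; the right-hand side tends to $0$ as $h\downarrow 0$ by the strong continuity just established applied to $f'$. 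Hence $f\in D(\cal G)$ and $\cal G f = f'$. For $\cal G\subseteq \cal D$: one uses the general fact that $\cal G$ generates a $C_0$-semigroup, hence is a closed, densely defined operator whose resolvent set is nonempty (it contains a right half-plane, with $\|S(t)\|_n$ growing only polynomially). Since $\cal D$ with the stated domain is an extension of $\cal G$ (shown above) and $\lambda - \cal G$ is already surjective for $\lambda$ in the resolvent set, while $\lambda - \cal D$ is injective on $D(\cal D)$ — indeed $f'=\lambda f$ with $f\in BUC_n$ forces $f(t)=e^{\lambda t}f(0)$, which lies in $BUC_n(\R^+,\X)$ only if $f(0)=0$ when $\Re\lambda>0$ — the two operators must coincide.

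The step I expect to require the most care is the injectivity/surjectivity bookkeeping in the second inclusion: one must check that for $\Re\lambda$ large enough $\lambda-\cal D$ really is injective on its domain (the exponential-solution argument above), so that the algebraic inclusion $\cal G\subseteq\cal D$ together with $\mathrm{ran}(\lambda-\cal G)=BUC_n(\R^+,\X)$ forces equality of domains. Everything else — the integral representation of difference quotients, the $(1+h)^n$ weight comparisons — is exactly the computation already carried out in Example \ref{exa 1} and Lemma \ref{lem 3.1}, so I would simply cite those estimates rather than redo them.
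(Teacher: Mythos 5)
The paper gives no proof of this lemma (it is declared ``standard'' and omitted), so your proposal is judged on its own correctness. Part (i) is fine: $\|S(h)f-f\|_n\to 0$ as $h\downarrow 0$ is exactly the definition of $n$-uniform continuity, and the bound of Lemma \ref{lem 3.1} handles continuity at $t_0>0$; strictly you should also note that $S(t)$ maps $BUC_n(\R^+,\X)$ into itself (that $S(t)f$ is again $n$-uniformly continuous), but that follows from the same weight comparison as in Lemma \ref{lem 3.1}. The inclusion ${\cal D}\subseteq{\cal G}$ via the integral representation of the difference quotient is also correct.

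The gap is in how you close part (ii). What you actually proved is that ${\cal G}$ extends ${\cal D}$, yet in the final step you invoke the opposite inclusion: you write that ``${\cal D}$ \ldots is an extension of ${\cal G}$ (shown above)'' and later appeal to ``the algebraic inclusion ${\cal G}\subseteq{\cal D}$'' --- neither was shown, and this is precisely the nontrivial direction. With the inclusion you do have, ${\cal D}\subseteq{\cal G}$, the pair of facts you cite (surjectivity of $\lambda-{\cal G}$, injectivity of $\lambda-{\cal D}$) does not force equality: the restriction of ${\cal G}$ to any proper subspace of $D({\cal G})$ also satisfies both, so the bookkeeping as stated proves nothing. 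Two standard repairs are available. Either (a) prove surjectivity of $\lambda-{\cal D}$ for some $\Re\lambda>0$: for $g\in BUC_n(\R^+,\X)$ check that $u(t)=\int_t^\infty e^{\lambda(t-s)}g(s)\,ds$ lies in $D({\cal D})$ and solves $\lambda u-u'=g$ (this is essentially the computation already carried out in the proof of Lemma \ref{lem 3.5} together with Example \ref{exa 1}); combined with ${\cal D}\subseteq{\cal G}$ and the injectivity of $\lambda-{\cal G}$ (automatic, since the growth bound of the semigroup is $\le 0$ so $\lambda\in\rho({\cal G})$), this yields ${\cal D}={\cal G}$. Or (b) prove ${\cal G}\subseteq{\cal D}$ directly: if $f\in D({\cal G})$, then $\|\cdot\|_n$-convergence of $h^{-1}(S(h)f-f)$ implies pointwise convergence, so $f$ has the continuous right derivative ${\cal G}f$ at every point, hence is differentiable with $f'={\cal G}f\in BUC_n(\R^+,\X)$, i.e. $f\in D({\cal D})$. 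Either fix is routine, but as written your identification of the generator is not established.
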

 \begin{proof}
 As the proof can be done in a standard manner it is omitted.
 \end{proof}

\subsection{Operator $\tilde{\cal D}$}
Throughout the paper we will use the following notation
\begin{eqnarray*}
C_{0,n }(\R^+,\X) &:=&\{ f\in BUC_n  (\R^+,\X): \ \lim_{t\to \infty}\| f(t)\| /(1+t)^n =0\} .
\end{eqnarray*}
It is easy to see that $C_{0,n }(\R^+,\X)$ is a closed subspace $BUC_n  (\R^+,\X)$, and is invariant under the translation semigroup $(S(t)_{t\ge 0})$.
In the space $BUC_n  (\R^+,\X)$ we introduce the following relation $R$:
\begin{equation}
 f \ R \ g \ \mbox{if and only if} \ \ f -g \in C_{0,n }(\R^+,\X) .
\end{equation}
This is an equivalence relation and the quotient space $\Y:= BUC_n  (\R^+,\X)/ R$ is a Banach space. We will also denote the norm in this quotient space $\Y$ by $\| \cdot \|_n$ if it does not cause any confusion.

\medskip
The class containing $f\in BUC_n  (\R^+,\X)$ will be denoted by $\tilde{f}$. Define $\tilde{\cal D}$ in $ \Y=BUC_n  (\R^+,\X)/ R$ as follows:
\begin{eqnarray}
D( \tilde{\cal D}) &:=&\{ \tilde{f} \in \Y : \exists u\in \tilde{f}, u\in D(\cal D)\} .
\end{eqnarray}
If $f\in D( \tilde{\cal D}) $, we set
\begin{equation}
\tilde{D}\tilde{f} := \widetilde {\cal Du} 
\end{equation}
for some $u\in \tilde f$. The following lemma will show that this $\tilde{D}$ is well defined as an operator in $\Y$.
\begin{lemma}
With the above notations, $\tilde{\cal D}$ is a well defined single valued linear operator in $\Y$. \end{lemma}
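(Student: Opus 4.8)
The plan is to verify two things: first, that the set $D(\tilde{\cal D})$ is genuinely the image of $D({\cal D})$ under the quotient map, so that the prescription makes sense on every class it claims to; and second — the real content — that the value $\tilde{D}\tilde f$ does not depend on which representative $u\in\tilde f\cap D({\cal D})$ we pick. Well-definedness of a single-valued operator is exactly this independence statement, together with linearity.

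First I would unwind the definitions: given $\tilde f\in D(\tilde{\cal D})$, by definition there is at least one $u\in\tilde f$ with $u\in D({\cal D})$, so ${\cal D}u=u'\in BUC_n(\R^+,\X)$ and $\widetilde{{\cal D}u}$ is a well-defined class in $\Y$; the only question is uniqueness. So suppose $u,v\in\tilde f$ with both $u,v\in D({\cal D})$. Then $u-v\in C_{0,n}(\R^+,\X)$ by definition of the equivalence relation $R$, and $u-v\in D({\cal D})$ since $D({\cal D})$ is a linear subspace. The claim reduces to: if $w:=u-v$ lies in $C_{0,n}(\R^+,\X)\cap D({\cal D})$, then $w'=(u-v)'\in C_{0,n}(\R^+,\X)$, i.e. $\|w'(t)\|/(1+t)^n\to 0$ as $t\to\infty$; granting this, $\widetilde{{\cal D}u}-\widetilde{{\cal D}v}=\widetilde{{\cal D}w}=\tilde 0$, which is what we want. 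Linearity then follows immediately: for $\tilde f,\tilde g\in D(\tilde{\cal D})$ with representatives $u\in\tilde f$, $v\in\tilde g$ in $D({\cal D})$, the function $au+bv$ represents $a\tilde f+b\tilde g$ and lies in $D({\cal D})$, so $\tilde{\cal D}(a\tilde f+b\tilde g)=\widetilde{a{\cal D}u+b{\cal D}v}=a\tilde{\cal D}\tilde f+b\tilde{\cal D}\tilde g$, and one checks analogously that $a\tilde f+b\tilde g\in D(\tilde{\cal D})$.

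The main obstacle is therefore the implication $w\in C_{0,n}\cap D({\cal D})\Rightarrow w'\in C_{0,n}$. Note this is \emph{not} true without the hypothesis $w'\in BUC_n$ (which is built into $D({\cal D})$): a small bump of fixed height and shrinking width riding out to infinity would have $w/(1+t)^n\to 0$ but $w'$ not decaying. With $w'\in BUC_n(\R^+,\X)$, however, the argument is the standard Landau–Kolmogorov type interpolation. I would argue by contradiction: if $\limsup_{t\to\infty}\|w'(t)\|/(1+t)^n=:2\eta>0$, pick $t_k\to\infty$ with $\|w'(t_k)\|/(1+t_k)^n\ge\eta$. Using the $n$-uniform continuity of $w'$ — here invoking, essentially, Example \ref{exa 1}'s mechanism in reverse, or directly the definition of $BUC_n$ — there is a fixed $\delta>0$ (independent of $k$) such that $\|w'(s)\|/(1+s)^n\ge\eta/2$ for all $s\in[t_k,t_k+\delta]$, after passing to a subsequence so the intervals are disjoint and using that $(1+s)^n$ changes by a bounded factor over an interval of length $\delta$. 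Integrating, $\|w(t_k+\delta)-w(t_k)\|\ge c\,\eta\,\delta\,(1+t_k)^n$ for a constant $c>0$, whence $\|w(t_k+\delta)-w(t_k)\|/(1+t_k)^n$ is bounded below away from $0$; but both $\|w(t_k+\delta)\|/(1+t_k+\delta)^n$ and $\|w(t_k)\|/(1+t_k)^n$ tend to $0$, and again the weight ratio over a fixed-length interval is bounded, so the difference quotient must tend to $0$ — a contradiction. This closes the argument and shows $\tilde{\cal D}$ is well defined and single-valued.
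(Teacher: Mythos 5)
Your argument is correct in substance, but it takes a genuinely different route from the paper. The paper disposes of well-definedness abstractly: writing $h=u-v\in C_{0,n}(\R^+,\X)\cap D({\cal D})$, it invokes the (unproved, ``standard'') lemma that ${\cal D}$ generates the translation semigroup on $BUC_n(\R^+,\X)$, so the difference quotients $(S(t)h-h)/t$ converge in $\|\cdot\|_n$ to ${\cal D}h$; since $C_{0,n}(\R^+,\X)$ is closed and $S(t)$-invariant, each quotient lies in $C_{0,n}(\R^+,\X)$ and hence so does the limit ${\cal D}h$. You instead prove the same key implication ($w\in C_{0,n}\cap D({\cal D})\Rightarrow w'\in C_{0,n}$) by a direct, elementary contradiction argument using only the definition of $n$-uniform continuity of $w'$ and the decay of $w$. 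Your version is more self-contained (it does not lean on the omitted generator lemma), while the paper's is shorter and reuses its semigroup framework; both settle the reduction to $w=u-v$ and the linearity in the same way.

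One step of yours needs a small repair: from $\|w'(s)\|/(1+s)^n\ge \eta/2$ on $[t_k,t_k+\delta]$ you cannot ``integrate'' to get a lower bound on $\|w(t_k+\delta)-w(t_k)\|$, since for an $\X$-valued derivative the direction may vary and the increments could cancel. The fix is already contained in your choice of $\delta$: $n$-uniform continuity gives $\|w'(s)-w'(t_k)\|\le \tfrac{\eta}{4}(1+t_k)^n$ for $s\in[t_k,t_k+\delta]$, hence
\begin{equation*}
\Bigl\| w(t_k+\delta)-w(t_k)-\delta\, w'(t_k)\Bigr\| \;=\;\Bigl\|\int_{t_k}^{t_k+\delta}\bigl(w'(s)-w'(t_k)\bigr)\,ds\Bigr\|\;\le\;\frac{\eta\delta}{4}(1+t_k)^n,
\end{equation*}
so $\|w(t_k+\delta)-w(t_k)\|\ge \delta\|w'(t_k)\|-\tfrac{\eta\delta}{4}(1+t_k)^n\ge \tfrac{3\eta\delta}{4}(1+t_k)^n$, which is the lower bound you wanted; the rest of your contradiction with $w\in C_{0,n}(\R^+,\X)$ then goes through unchanged.
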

\begin{proof}
First we show that the operator is a well defined single-valued operator. In fact, assuming $\tilde{f}\in D(\tilde{\cal D})$, we will prove that the definition of $\tilde{\cal D}\tilde{f}$ does not depend on the choice of representatives $u$ of this class $\tilde{f}$. To this end, suppose that $u,v\in \tilde{f}$ such that $u,v\in D(\cal D)$. Then, by the definition of $\tilde{\cal D}\tilde{f}$, $\tilde{\cal D}\tilde{f} =\widetilde{{\cal D}u}$, and at the same time $\tilde{\cal D}\tilde{f} =\widetilde{{\cal D}v}$. 
We will show that $\widetilde{{\cal D}u}= \widetilde{{\cal D}v}$, or equivalently, ${\cal D}(u-v) \in C_{0,n } (\R^+,\X)$. In fact, since $u,v\in \tilde{f}$, if we set $h:= u-v$, then $h \in C_{0,n }(\R^+,\X)$, and $h\in D({\cal D})$.
 Therefore,
$$
\lim_{t\to 0^+} \frac{S(t)h-h}{t} = {\cal D}h.
$$
Note that both $S(t)h$ and $h$ are in $C_{0,n }(\R^+,\X)$, so is ${\cal D}h={\cal D}(u-v)$. This proves that $\tilde{\cal D}$ is a well defined single valued operator. Its linearity is clear. The lemma is proved.
\end{proof}

For each given $f\in BUC_n  (\R^+,\X)$ consider the following complex function $\hat f(\lambda )$ in $\lambda$ defined as
\begin{equation}
\hat f (\lambda ) := (\lambda -\tilde{\cal D})^{-1} \tilde{f} .
\end{equation}

\begin{lemma}\label{lem 3.5}
$\hat f(\lambda )$ exists as an analytic function of $\lambda$ in the region $\lambda \in \C \backslash i\R$. Moreover, the following inequality is valid
\begin{eqnarray}
\| (\lambda -\tilde{\cal D})^{-1} \tilde{f}\|_n  &\le  &
\frac{e^{\Re \lambda} \Gamma (n+1,\Re \lambda ) }{(\Re \lambda)^{n+1} }
\| \tilde f\|_n  , \ \mbox{for all} \  \Re\lambda >0 ,\\
\| (\lambda -\tilde{\cal D})^{-1} \tilde{f}\|_n  &\le  & \frac{\|f\|_n}{|\Re\lambda |} , \ \mbox{for all} \ \Re\lambda <0 ,
\end{eqnarray}
where
$$
\Gamma (a,x):= \int^\infty _x t^{a-1}e^{-t}dt , \ a, x \ge 0 . 
$$
\end{lemma}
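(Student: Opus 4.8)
The plan is to construct $\hat f(\lambda)$ explicitly as (the class of) the solution of the scalar-type ODE $y' = \lambda y + f$ on $\R^+$, and then read off the norm estimates from the integral formula for that solution. For $\Re\lambda > 0$ the natural candidate is the "backward" particular solution
\begin{equation*}
u_\lambda(t) := -\int_t^\infty e^{\lambda(t-s)} f(s)\, ds,
\end{equation*}
which converges because $\Re\lambda > 0$ and $f$ is $n$-bounded; one checks $u_\lambda$ is differentiable with $u_\lambda' = \lambda u_\lambda + f$, that $u_\lambda \in BUC_n(\R^+,\X)$ (using Example \ref{exa 1}, since $u_\lambda' \in BC_n$), hence $u_\lambda \in D(\mathcal{D})$ and $(\lambda - \mathcal{D})u_\lambda = -f$, so $(\lambda - \tilde{\mathcal{D}})\widetilde{u_\lambda} = -\tilde f$; the sign is absorbed by replacing $f$ with $-f$ or by taking $-u_\lambda$. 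For $\Re\lambda < 0$ the convergent choice is instead the "forward" solution $v_\lambda(t) := \int_0^t e^{\lambda(t-s)} f(s)\, ds$, which again solves the ODE and lies in $BUC_n$; here the homogeneous solutions $ce^{\lambda t}$ decay and lie in $C_{0,n}$, so in the quotient $\Y$ the class is still well defined. Invertibility (that $\lambda - \tilde{\mathcal{D}}$ is injective on $\Y$) follows because any $w \in D(\mathcal{D})$ with $w' = \lambda w$ is $ce^{\lambda t}$, which is in $C_{0,n}$ when $\Re\lambda < 0$ and is $n$-bounded only when $c = 0$ if $\Re\lambda > 0$; in either case $\widetilde w = 0$.

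Next I would establish the two norm estimates by bounding the representative solutions pointwise. For $\Re\lambda < 0$, from $v_\lambda(t) = \int_0^t e^{\lambda(t-s)} f(s)\,ds$ one gets
\begin{equation*}
\|v_\lambda(t)\| \le \int_0^t e^{\Re\lambda (t-s)} (1+s)^n\, ds\, \|f\|_n \le (1+t)^n \|f\|_n \int_0^t e^{\Re\lambda(t-s)}\, ds \le \frac{(1+t)^n}{|\Re\lambda|}\|f\|_n,
\end{equation*}
so $\|\widetilde{v_\lambda}\|_n \le \|v_\lambda\|_n \le \|f\|_n/|\Re\lambda|$. For $\Re\lambda > 0$, using $u_\lambda(t) = -\int_t^\infty e^{\lambda(t-s)} f(s)\,ds$ and the substitution $s = t + r$,
\begin{equation*}
\|u_\lambda(t)\| \le \|f\|_n \int_0^\infty e^{-\Re\lambda\, r} (1+t+r)^n\, dr,
\end{equation*}
and the worst case over $t$ is $t = 0$, giving $\|u_\lambda\|_n \le \|f\|_n \int_0^\infty e^{-\Re\lambda\, r}(1+r)^n\, dr$; the substitution $\tau = \Re\lambda(1+r)$ turns this integral into $e^{\Re\lambda}(\Re\lambda)^{-(n+1)}\int_{\Re\lambda}^\infty \tau^n e^{-\tau}\,d\tau = e^{\Re\lambda}\Gamma(n+1,\Re\lambda)/(\Re\lambda)^{n+1}$, which is exactly the claimed bound. (One should double-check whether the supremum over $t$ of $\int_0^\infty e^{-\Re\lambda r}(1+t+r)^n\,dr/(1+t)^n$ is genuinely attained at $t=0$; since $(1+t+r)^n/(1+t)^n = (1 + r/(1+t))^n$ is decreasing in $t$, it is.)

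Finally I would record analyticity: $\lambda \mapsto (\lambda - \tilde{\mathcal{D}})^{-1}$ is the resolvent of a closed operator on the Banach space $\Y$, hence analytic on the resolvent set, and the computations above show $\C \setminus i\R$ lies in that resolvent set; alternatively analyticity is visible directly from the integral representations, which depend holomorphically on $\lambda$ in each half-plane. The main obstacle I anticipate is the bookkeeping around the quotient space: one must be careful that the two formulas ($u_\lambda$ for the right half-plane, $v_\lambda$ for the left half-plane) both genuinely represent $(\lambda-\tilde{\mathcal D})^{-1}\tilde f$ — i.e. that $\tilde{\mathcal D}$ acting on the class really does return the class of $\mathcal D u_\lambda$ — and that the difference of any two candidate representatives (which differs by a homogeneous solution $ce^{\lambda t}$) lies in $C_{0,n}(\R^+,\X)$ precisely when $\Re\lambda < 0$; for $\Re\lambda > 0$ the point is rather that $n$-boundedness forces $c = 0$, so the representative is unique even before passing to the quotient. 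Verifying these consistency points, and that $u_\lambda, v_\lambda$ are $n$-uniformly continuous (not merely $n$-bounded), is where the real care is needed, though each step is routine given Example \ref{exa 1} and Lemma \ref{lem 3.1}.
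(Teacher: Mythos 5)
Your construction is essentially the paper's: solve $y'=\lambda y+f$ in each half-plane, taking the forward integral $\int_0^t e^{\lambda(t-s)}f(s)\,ds$ for $\Re\lambda<0$ and the tail integral $\int_t^\infty e^{\lambda(t-s)}f(s)\,ds$ for $\Re\lambda>0$, checking $n$-boundedness and (via Example \ref{exa 1}) $n$-uniform continuity, and your two estimates, including $\int_0^\infty e^{-\Re\lambda r}(1+r)^n\,dr=e^{\Re\lambda}\Gamma(n+1,\Re\lambda)/(\Re\lambda)^{n+1}$ and the bound $(1+t+r)^n/(1+t)^n\le(1+r)^n$, are exactly the ones in the paper's proof.

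The one step where your outline, as written, is not enough is the passage to the quotient space $\Y$. You argue injectivity of $\lambda-\tilde{\cal D}$ from the fact that solutions of the exact homogeneous equation $w'=\lambda w$ are $ce^{\lambda t}$, and you describe the ambiguity in representatives as being only such a term. But in $\Y$ the kernel condition $(\lambda-\tilde{\cal D})\tilde w=0$ only yields a representative $w\in D({\cal D})$ with $\lambda w-w'=h$ for \emph{some} $h\in C_{0,n}(\R^+,\X)$, not $h=0$; likewise, replacing $f$ by another representative $f+g$ with $g\in C_{0,n}(\R^+,\X)$ changes your integral formulas by $\int_0^t e^{\lambda(t-s)}g(s)\,ds$, respectively $\int_t^\infty e^{\lambda(t-s)}g(s)\,ds$. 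Hence both injectivity and the independence of the class $(\lambda-\tilde{\cal D})^{-1}\tilde f$ of the chosen representative of $\tilde f$ require the additional fact that these two integral operators map $C_{0,n}(\R^+,\X)$ into itself; this is precisely what the paper proves with two explicit tail estimates, and it is also what allows $\|f\|_n$ to be replaced by $\|\tilde f\|_n$ on the right-hand side for $\Re\lambda>0$ (apply the bound to an arbitrary representative and pass to the infimum). The repair is routine --- rerun your norm estimates with $\epsilon$ in place of $\|f\|_n$ on a tail $[T,\infty)$ --- but it is a genuine ingredient your plan omits rather than mere bookkeeping; once it is added (and the harmless sign $(\lambda-{\cal D})v_\lambda=-f$ is tracked as you indicate), your argument coincides with the paper's, analyticity then following from the resolvent identity as you say.
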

\begin{proof}
Consider the equation
\begin{equation}
y'=\lambda y + f(t), \ t\ge 0.
\end{equation}
{\bf The case when $\Re\lambda <0$:} the general solution of the equation in this case is
\begin{equation}
y(t)= e^{\lambda t} y_0 +\int^t_{0} e^{\lambda (t-s)}f(s)ds , \ y_0\in \X, t\ge 0.
\end{equation}
 Since $\Re\lambda <0$ all functions $g(t)=e^{\lambda t}y_0$ are exponentially decay to zero, so they are all in $C_{0,n  }(\R^+,\X)$. We are going to show that in this case ($\Re\lambda <0$), $\lambda \in \rho (\tilde{\cal D})$, and the function
 $$
 u: \R^+ \ni t \mapsto \int^t_{0} e^{\lambda (t-s)}f(s)ds 
 $$
 is a representative of
 the equivalence class $(\lambda -\tilde{\cal D})^{-1} \tilde f$. To this purpose, let $g\in C_{0,n } (\R^+,\X)$. Then, the general solution of the equation
 $$
 y'=\lambda y+(f(t)+g(t))
 $$
 is the following
 \begin{eqnarray*}
y(t) &=& e^{\lambda t} y_0 +\int^t_{0} e^{\lambda (t-s)}(f(s)+g(s))ds \\
&=& e^{\lambda t} y_0 + \int^t_{0} e^{\lambda (t-s)}g(s)ds +\int^t_{0} e^{\lambda (t-s)}f(s)ds.
\end{eqnarray*}
 
It suffices to show that the function $h$ defined as
 $$
 h(t):=  \int^t_{0} e^{\lambda (t-s)}g(s)ds  ,
 $$
 is in $C_{0,n  }(\R^+,\X)$. In fact, for a given $\epsilon' >0$ there is a sufficiently large number $T_0$ such that for $t \ge T_0$ 
 $$ \frac{ \| g(t)\|}{(1+t)^n  } < \epsilon' .$$
Next, for $t\ge T_0$ we have
\begin{eqnarray*}
\frac{\| \int^t_{0} e^{\lambda (t-s)}g(s)ds \|}{(1+t)^n } &\le & \frac{1}{(1+t)^n } \left( \int^{T_0}_0 e^{\Re \lambda (t-s)} \| g(s\| ds + \int^t_{T_0} e^{\Re\lambda (t-s)} \| g(s)\| ds \right) \\
&\le & \frac{e^{\Re\lambda t}}{(1+t)^n } \int^{T_0}_0 e^{-\Re\lambda s}\|g(s)\| ds + \int^t_{T_0} e^{\Re\lambda (t-s)}\frac{\|g(s)\|} {(1+s)^n  }ds\\
\end{eqnarray*}
Since $\Re\lambda <0$,
$$
\lim_{t\to \infty}  \frac{e^{\Re\lambda t}}{(1+t)^n } \int^{T_0}_0 e^{-\Re\lambda s}\|g(s)\| ds =0.
$$
For any given $\epsilon >0$, there is a sufficiently large number $T_1$ such that if $t\ge T_1$, then
$$
 \frac{e^{\Re\lambda t}}{(1+t)^n } \int^{T_0}_0 e^{-\Re\lambda s}\|g(s)\| ds < \frac{\epsilon}{2}.
$$
On the other hand,
\begin{eqnarray*}
 \int^t_{T_0} e^{\Re\lambda (t-s)}\frac{\|g(s)\|} {(1+s)^n  }ds &\le& e^{\Re\lambda t} \int^t_{T_0} e^{-\Re\lambda s}\epsilon' ds \\
 &=& \frac{\epsilon'}{-\Re \lambda } (1-e^{\Re\lambda(t-T_0)}) \\
 &\le&  \frac{\epsilon'}{-\Re \lambda } .
\end{eqnarray*}
Finally, for any given $\epsilon >0$, if we choose $T=\max (T_0,T_1)$ and $\epsilon' = -\epsilon \Re\lambda /2$, then, for $t\ge T$
\begin{equation}
\frac{\|h(t)\|}{(1+t)^n  } < \epsilon .
\end{equation}
This shows that $h\in C_{0,n }(\R^+,\X)$.

\medskip
We will show that the function $u$ defined as
$$
 u(t):=  \int^t_{0} e^{\lambda (t-s)}f(s)ds .
 $$
 is in $BUC_n  (\R^+,\X)$ for every $\Re\lambda <0$, and $f\in BUC_n  (\R^+,\X)$. In fact, 
 as noted in the Example (\ref{exa 1})
 $u$ is a solution of the equation
 $u'=\lambda u+f$, so this claim is proved if we can show that $u$ is $n $-bounded. Since $\Re\lambda <0$, we have
 \begin{eqnarray}
\sup_{t\in\R^+} \frac{ \| u(t) \|}{(1+t)^n } &\le & \sup_{t\in\R^+} \frac{ \int^t_0 e^{\Re\lambda (t-s)}\| f(s)\| ds}{(1+t)^n }  \nonumber\\
&\le& \sup_{t\in\R^+} \int^t_0 e^{\Re\lambda (t-s)} \frac{\| f(s)\|}{(1+s)^n } ds \nonumber \\
&=&\sup_{t\in\R^+}\int^t_0 e^{\Re\lambda (t-s)} \| f(s)\|_n  ds \nonumber \\
&=&\sup_{t\in\R^+} \frac{\|f\|_n }{|\Re\lambda|} (1-e^{\Re\lambda t}) \nonumber\\
&\le& \frac{\|f\|_n }{|\Re\lambda|} .\label{norm of D}
 \end{eqnarray}
 By the above argument, we have proved that if $\Re\lambda <0$, $\lambda \in \rho (\tilde{\cal D})$, and
 $$
 (\lambda -\tilde{\cal D})^{-1} \tilde{f} =\tilde u
 $$
and, by (\ref{norm of D})
 \begin{eqnarray*}
\| (\lambda -\tilde{D})^{-1}\tilde{f} \|_n  &\le&     \frac{\|f\|_n  }{|\Re\lambda|} .
 \end{eqnarray*}

 \bigskip\noindent
{\bf The case when  $\Re \lambda >0$:} the general solution of the equation 
\begin{equation}\label{3.24}
y'=\lambda y+f(t),
\end{equation}
where $y(t)\in \X$, and $f\in BUC_n  (\R^+,\X)$,
is
\begin{equation}
y(t)= e^{\lambda t}y_0 -\int^\infty_t e^{\lambda (t-s)}f(s)ds , \  y_0 \in \X , t\ge 0.
\end{equation}
Note that the function $h(t):= e^{\lambda t}\| y_0\| $ for non-zero $y_0$ grows exponentially as $t\to \infty$. Let us consider the function
\begin{equation}\label{def of g}
g(t):=  \int^\infty_t e^{\lambda (t-s)}f(s)ds .
\end{equation}
We will show that $g$ is the only $n $ bounded solution. By using the Laplace transform of the function $f(t)=(1+t)^n$ we have
\begin{eqnarray}
\frac{\| g(t)\|}{(1+t)^n  } &\le& \frac{1}{(1+t)^n }  \int^\infty_t e^{\Re\lambda (t-s)}\frac{\| f(s)\|}{(1+s)^n } (1+s)^n  ds \nonumber \\
&\le&  \frac{\| f\|_n }{(1+t)^n }  \int^\infty_t e^{\Re \lambda (t-s)}  (1+s)^n  ds \nonumber  \\
&=& \frac{\| f\|_n }{(1+t)^n } \int^\infty_0 e^{-\Re\lambda \xi }(1+t+\xi )^n  d\xi \nonumber  \\
&=& \| f\|_n  \int^\infty_0 e^{-\Re\lambda \xi }\frac{(1+t+\xi )^n }{(1+t)^n } d\xi \nonumber  \\
&\le& \| f\|_n  \int^\infty_0 e^{-\Re\lambda \xi } (1+\xi )^nd\xi \nonumber \\
&=& \frac{e^{\Re \lambda} \Gamma (n+1,\Re \lambda ) }{(\Re \lambda)^{n+1} }
 \| f\|_n  .\label{c3.14}
\end{eqnarray}

This means, $g$ is the unique solution in $BUC_n  (\R^+,\X)$ of (\ref{3.24}), and thus $(\lambda -{\cal D})^{-1}f  =g$. 
Next, we show that if $h\in C_{0,n }(\R^+,\X)$, then the function $k(\cdot)$ defined as
$$
k(t):= \int^\infty_t e^{\lambda (t-s)}h(s)ds 
$$
is in $ C_{0,n }(\R^+,\X)$. In fact, for each given $\epsilon >0$, there exists a number $N>0$ such that if $t>N$ then, 
$$
\frac{\| h(t)\| }{(1+t)^n} <\epsilon .
$$
Consequently, for $t>N$, in the same way as above we have
\begin{eqnarray}
\frac{\| k(t)\|}{(1+t)^n  } &\le& \frac{1}{(1+t)^n }  \int^\infty_t e^{\Re\lambda (t-s)}\frac{\| h(s)\|}{(1+s)^n } (1+s)^n  ds \nonumber \\
&\le&  \frac{\epsilon }{(1+t)^n }  \int^\infty_t e^{\Re \lambda (t-s)}  (1+s)^n  ds \nonumber  \\
&=& \frac{\epsilon }{(1+t)^n } \int^\infty_0 e^{-\Re\lambda \xi }(1+t+\xi )^n  d\xi \nonumber  \\
&=&  \epsilon  \int^\infty_0 e^{-\Re\lambda \xi }\frac{(1+t+\xi )^n }{(1+t)^n } d\xi \nonumber  \\
&\le&  \epsilon   \int^\infty_0 e^{-\Re\lambda \xi } (1+\xi )^nd\xi \nonumber \\
&\le&  \epsilon \frac{e^{\Re \lambda} \Gamma (n+1,\Re \lambda ) }{(\Re \lambda)^{n+1} }.
\end{eqnarray}
This shows that $k\in C_{0,n }(\R^+,\X)$.

\medskip
At this point we have shown that the map $(\lambda -{\cal D})^{-1} $ taking $f\in BUC_n(\R^+,\X)$ into the function $g$ is well defined as the inverse of $(\lambda -{\cal D})$ and it leaves $C_{0,n }(\R^+,\X)$ invariant.
Consequently, if $\Re\lambda >0$, $\lambda \in \rho (\tilde D)$, and  
\begin{eqnarray}
\| (\lambda -\tilde{\cal D})^{-1}\tilde{f} \|_n   &\le& \| g\|_n \nonumber  \\
&\le& \frac{e^{\Re \lambda} \Gamma (n+1,\Re \lambda ) }{(\Re \lambda)^{n+1} }
  \| f\|_n .
 \end{eqnarray}
 As this is valid for any representative in the class $\tilde f$, we have
 \begin{eqnarray}
\| (\lambda -\tilde{\cal D})^{-1}\tilde{f} \|_n   &\le&  \frac{e^{\Re \lambda} \Gamma (n+1,\Re \lambda ) }{(\Re \lambda)^{n+1} }
  \| \tilde f\|_n .
 \end{eqnarray}
 This completes the proof of the lemma.
 \end{proof}

\begin{definition}
Let $n$ be a fixed natural number, and let $f\in BUC_n  (\R^+,\X)$. The set of all points $\xi_0 \in \R$ such that $\hat f (\lambda )$ has no analytic extension to any neighborhood of $i\xi_0$ is defined to be the spectrum of $f$, denoted by $\sigma_n  (f)$
\end{definition}
We will need the following result that is closely related to \cite[Chap. 0]{pru},
\cite[Lemma 2.17]{min}:

\begin{lemma}\label{lem prep gel}
Let $N$ be a natural number, and let $f(z)$ be a complex function taking values in a Banach space $\X$ and be holomorphic in $\C \backslash i\R$ such that there is a positive number $M$ independent of $z$ for which
\begin{equation}
\| f(z)\| \le \frac{M}{|\Re\ z|^N}, \quad \ \mbox{for
all} \ \Re z \not= 0 , |\Re z|< 1.
\end{equation}
Assume further that $i\xi\in i\R$ is an isolated singular point of $f(z)$ at which the Laurent expansion is of the form
\begin{equation}\label{laurent exp}
f(z) = \sum_{k=-\infty}^{\infty} a_k (z -i\xi )^k,
\end{equation}
where
\begin{equation}\label{laurent coeff}
a_k =\frac{1}{2\pi i} \int_{| z-i\xi |=r} \frac{f(z)dz }{(z-i\xi
)^{k+1}},\quad k\in \Z .
\end{equation}
Then, for each $k\in \Z$,
\begin{eqnarray}\label{est}
\| \sum_{j=0}^N \frac{N!}{j!(N-j)!}  {r^{2N-2j} } a_{k-2j}\| \le 2^NM r^{N-k}  .
\end{eqnarray}
\end{lemma}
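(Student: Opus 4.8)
The plan is to read off the left-hand side of \eqref{est} directly from the contour-integral formula \eqref{laurent coeff} for the Laurent coefficients, and to notice that the binomial weights $N!/(j!(N-j)!)$ together with the powers $r^{2N-2j}$ are tuned precisely so that, after summing over $j$, the sum of integrands collapses to $f$ multiplied by $\cos^N\theta$. That extra factor $\cos^N\theta$ is exactly what neutralizes the singularity in the hypothesis $\|f(z)\|\le M/|\Re z|^N$; everything else is a direct estimate.

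First I would fix $r$ with $0<r<1$ small enough that $f$ --- which near the isolated singular point $i\xi$ is holomorphic on a full punctured disk --- is holomorphic on $\{0<|z-i\xi|\le r\}$; then on the circle $|z-i\xi|=r$ one has $|\Re z|=r|\cos\theta|\le r<1$, so the growth hypothesis applies there. Parametrizing $z=i\xi+re^{i\theta}$ in \eqref{laurent coeff} gives
\[
r^{k}a_k=\frac{1}{2\pi}\int_0^{2\pi} f\bigl(i\xi+re^{i\theta}\bigr)\,e^{-ik\theta}\,d\theta ,
\]
and therefore, for each $j$,
\[
r^{2N-2j}a_{k-2j}=\frac{r^{2N-k}}{2\pi}\int_0^{2\pi} f\bigl(i\xi+re^{i\theta}\bigr)\,e^{-i(k-2j)\theta}\,d\theta .
\]

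Next I would multiply by $\binom{N}{j}$, sum over $j=0,\dots,N$, interchange the (finite) sum with the integral, and simplify the angular sum using the binomial theorem together with $1+e^{2i\theta}=2\cos\theta\,e^{i\theta}$:
\[
\sum_{j=0}^N\binom{N}{j}e^{-i(k-2j)\theta}=e^{-ik\theta}\bigl(1+e^{2i\theta}\bigr)^N=2^N\cos^N\theta\;e^{i(N-k)\theta}.
\]
This produces
\[
\sum_{j=0}^N\binom{N}{j}r^{2N-2j}a_{k-2j}=\frac{2^N r^{2N-k}}{2\pi}\int_0^{2\pi} f\bigl(i\xi+re^{i\theta}\bigr)\cos^N\theta\;e^{i(N-k)\theta}\,d\theta .
\]
Since $\Re(i\xi+re^{i\theta})=r\cos\theta$, the hypothesis gives $\|f(i\xi+re^{i\theta})\cos^N\theta\|\le M/r^N$ for every $\theta$ with $\cos\theta\neq0$, while $|e^{i(N-k)\theta}|=1$; estimating the last integral by the length of $[0,2\pi]$ times this bound yields $2^N r^{2N-k}\cdot(M/r^N)=2^N M r^{N-k}$, which is exactly \eqref{est}.

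The only point requiring care is $\theta=\pi/2,3\pi/2$, where the circle crosses $i\R$ and the pointwise bound on $\|f(z)\|$ formally diverges. Here I would invoke that $i\xi$ is assumed to be an \emph{isolated} singular point, so the holomorphic extension of $f$ is defined and bounded near those two points; consequently $\theta\mapsto f(i\xi+re^{i\theta})\cos^N\theta$ is bounded by $M/r^N$ off a set of measure zero, the integral is well defined, and the estimate goes through. I expect this to be the only genuine subtlety; identifying the angular sum and carrying out the final estimate are routine. (As an aside, letting $r\downarrow 0$ in \eqref{est} forces $a_m=0$ for all $m<-N$, so $i\xi$ is at worst a pole of order $N$ --- the form in which this lemma will be used.)
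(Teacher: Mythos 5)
Your proof is correct and is essentially the paper's own argument in angular coordinates: the paper multiplies the Cauchy kernel by $\bigl(1+(z-i\xi)^2/r^2\bigr)^N$, uses $\bigl|1+(z-i\xi)^2/r^2\bigr|=2|\Re z|/r$ on the circle and expands by the binomial theorem, which is exactly your factor $2^N\cos^N\theta\,e^{i(N-k)\theta}$ after setting $z-i\xi=re^{i\theta}$. Your handling of the points $\theta=\pi/2,3\pi/2$ (regularity of $f$ there for small $r$, bound off a null set) is a fine touch that the paper leaves implicit, but the route is the same.
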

\begin{proof}
\medskip
For each $k\in \Z$ and
$0<r<1$, we have
\begin{eqnarray*}
&&\| \frac{1}{2\pi i} \int_{|z-i\xi |=r}  \left(
1+\frac{(z-i\xi )^2}{r^2}\right)^N \frac{f(z)}{(z-i\xi )^{k+1}}dz \| \\
&& \hspace{2cm} \le \frac{1}{2\pi} \int_{|z-i\xi|=r} \left|
1+\frac{(z-i\xi )^2}{r^2}\right|^N\cdot \| f(z)\| r^{-k-1}\cdot |dz | .
\end{eqnarray*}
By letting $z-i\xi = re^{i\theta}$, we have
\begin{eqnarray*}
|1+\frac{(z-i\xi )^2}{r^2}| &=&  | 1+e^{i2\theta}| \\
&=& |e^{i\theta} (e^{-i\theta}+e^{i\theta})| \\
&=& |2 \cos \theta |\\
&=& r^{-1}| 2\Re z| .
\end{eqnarray*}
Hence,
\begin{equation}\label{n15}
\left|1+\frac{(z-i\xi )^2}{r^2}\right|^N = r^{-N}2^N| \Re
z | ^N.
\end{equation}
Therefore,
\begin{eqnarray}
&&\| \frac{1}{2\pi i} \int_{|z-i\xi |=r}  \left(
1+\frac{(z-i\xi )^2}{r^2}\right)^N \frac{f(z)}{(z-i\xi )^{k+1}}dz \|   \\
&& \hspace{2cm} \le  \frac{1}{2\pi}
\int_{|z-i\xi|=r} r^{-k-1-N}  2^N| \Re
z | ^N    \frac{M}{|\Re z|^N} \cdot |dz |
\nonumber \\
&& \hspace{2cm} = \frac{ r^{-k-1-N} 2^NM}{2\pi} \int_{|z-i\xi|=r}  |dz | \nonumber \\
&& \hspace{2cm} = r^{-k-N} 2^NM.\label{c15}
\end{eqnarray}
Consider the Laurent expansion (\ref{laurent exp}).
Using (\ref{c15}) and the identity
\begin{eqnarray*}
\left(
1+\frac{(z-i\xi )^2}{r^2}\right)^N &=& 
  \sum_{j=0}^N \frac{N!}{j!(N-j)!}   \left( \frac{z-i\xi }{r}  \right)^{2j} ,
  \end{eqnarray*}
we can see that for all $k\in\Z$,
\begin{eqnarray}
\| \sum_{j=0}^N \frac{N!}{j!(N-j)!}  \frac{1}{r^{2j} } a_{k-2j}\| \le r^{-k-N} 2^NM .
\end{eqnarray}
Multiplying both sides by $r^{2N}$ gives (\ref{est}). The lemma is proven.
\end{proof}

\begin{theorem}\label{the 3.8}
Let $g\in BUC_n  (\R^+,\X)$. Then,
\begin{enumerate}
\item
If $\xi_0$ is an isolated point in $\sigma_n  (g)$, then $i\xi_0$ is either removable or a pole of ${\hat g(\lambda)}$ of order less than $n+1$;
\item If $\sigma_n  (g)=\emptyset$, then $g\in C_{0,n }(\R^+,\X)$;
\item $\sigma_n  (g)$ is a closed subset of $\R$.
\end{enumerate}
\end{theorem}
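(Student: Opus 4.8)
The plan is to prove the three assertions in the order (iii), (i), (ii).

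\emph{Assertion (iii).} I would show that $\R\setminus\sigma_n(g)$ is open. If $\xi_0\notin\sigma_n(g)$, choose an open $V\subseteq\C$ with $i\xi_0\in V$ to which $\hat g$ has an analytic extension. Since $V$ is then a neighbourhood of $i\xi_1$ for every real $\xi_1$ with $i\xi_1\in V$, the same extension shows $\xi_1\notin\sigma_n(g)$; hence the real interval $\{\xi_1:i\xi_1\in V\}$ lies in $\R\setminus\sigma_n(g)$, which is therefore open, so $\sigma_n(g)$ is a closed subset of $\R$. (That ``$\hat g$ has an analytic extension near $i\xi_0$'' is unambiguous follows from the identity theorem: by Lemma \ref{lem 3.5} $\hat g$ is already analytic on $\C\setminus i\R$, a set with nonempty interior near every point of $i\R$, so two such extensions agree on the overlap.)

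\emph{Assertion (i).} Let $\xi_0$ be isolated in $\sigma_n(g)$ and fix $\delta>0$ with $[\xi_0-\delta,\xi_0+\delta]\cap\sigma_n(g)=\{\xi_0\}$. Combining the analyticity of $\hat g$ on $\C\setminus i\R$ with its analytic continuation across the points $i\xi$, $0<|\xi-\xi_0|\le\delta$, I conclude that $\hat g$ is analytic on a punctured disc about $i\xi_0$, i.e. $i\xi_0$ is an isolated singularity. Next I would record the growth of $\hat g$ near $i\R$. Lemma \ref{lem 3.5} gives $\|\hat g(\lambda)\|_n\le\|g\|_n/|\Re\lambda|$ for $\Re\lambda<0$; for $\Re\lambda>0$ the bound there is of order $|\Re\lambda|^{-(n+1)}$ as $\Re\lambda\downarrow0$, but splitting the representative $-\int_0^\infty e^{-\lambda\xi}g(\cdot+\xi)\,d\xi$ at $\xi=t$ shows its tail $-\int_t^\infty e^{-\lambda\xi}g(\cdot+\xi)\,d\xi$ lies in $C_{0,n}(\R^+,\X)$, which improves the quotient estimate to $\|\hat g(\lambda)\|_n\le 2^n\|g\|_n/|\Re\lambda|$ for $\Re\lambda>0$ as well. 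Thus $\|\hat g(\lambda)\|_n\le M/|\Re\lambda|$ for $0<|\Re\lambda|<1$, and Lemma \ref{lem prep gel} (with $N=1$) applies: the Laurent coefficients $a_k$ of $\hat g$ at $i\xi_0$ satisfy \eqref{est}, and fixing $k\le 0$ and letting $r\downarrow0$ in \eqref{est} forces $a_{k-2}=0$, so $a_m=0$ for all $m\le-2$. Hence $i\xi_0$ is removable or a simple pole of $\hat g$ — in particular of order less than $n+1$, as claimed. (Even if one only uses the crude bound of Lemma \ref{lem 3.5}, i.e. $N=n+1$, the same limiting argument still gives a pole of order at most $n+1$.)

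\emph{Assertion (ii).} If $\sigma_n(g)=\emptyset$ then, exactly as in (iii), $\hat g$ extends to an entire $\Y$-valued function $F$. For $\Re\lambda>0$ we have $\hat g(\lambda)=(\lambda-\tilde{\cal D})^{-1}\tilde g=\int_0^\infty e^{-\lambda t}\,\widetilde{g(t+\cdot)}\,dt$, so $F$ is the entire Laplace transform of the orbit $t\mapsto\widetilde{g(t+\cdot)}$ of the semigroup induced on $\Y$ by $(S(t))_{t\ge0}$ (its generator being $\tilde{\cal D}$). This orbit has constant norm $\|\widetilde{g(t+\cdot)}\|_n=\limsup_{s\to\infty}\|g(s)\|/(1+s)^n$, independent of $t$; in particular it is bounded, and it vanishes identically as soon as it vanishes at a single time, which happens precisely when $g\in C_{0,n}(\R^+,\X)$. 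By Lemma \ref{lem 3.5}, $F$ is bounded on $\{\Re\lambda\le-1\}$ and $\|F(\lambda)\|_n\to0$ as $\Re\lambda\to+\infty$. I would finish with the Arendt--Batty--Lyubich--Vu device: write $\widetilde{g(t+\cdot)}=\frac{1}{2\pi i}\int_{c-i\infty}^{c+i\infty}e^{\lambda t}F(\lambda)\,d\lambda$ for $c>0$ and $t>0$, shift the contour leftward past $i\R$ (legitimate because $F$ is entire), and let $\Re\lambda\to-\infty$, using the $|\Re\lambda|^{-1}$ decay on the left, to get $\widetilde{g(t+\cdot)}=0$ for all $t>0$; hence $g\in C_{0,n}(\R^+,\X)$.

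The genuinely routine parts are (iii) and, once Lemmas \ref{lem 3.5} and \ref{lem prep gel} are in hand, (i). The step I expect to be the main obstacle is the contour shift in (ii): the estimates above do not immediately supply enough decay of $F$ along vertical lines to justify passing to the limit in the Bromwich integral. I would handle this in the standard way — first proving the conclusion for regular $\tilde g$ (say $\tilde g\in D(\tilde{\cal D}^{2})$, where integration by parts yields $O(|\lambda|^{-2})$ decay of $F$ along verticals and absolute convergence of the inversion integral), then passing to general $\tilde g$ by density, or else by an Abel/Ces\`{a}ro regularization of the integral; an alternative is a Phragm\'{e}n--Lindel\"{o}f argument on the strip $\{|\Re\lambda|\le1\}$ combined with Liouville's theorem, once a polynomial bound for $F$ on that strip has been secured.
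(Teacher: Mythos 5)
Your parts (iii) and (i) are essentially sound. For (i) you follow the same route as the paper (Laurent expansion plus Lemma \ref{lem prep gel}); your extra observation that, modulo $C_{0,n}(\R^+,\X)$, the tail of the representative $\int_0^\infty e^{-\lambda\xi}g(\cdot+\xi)\,d\xi$ can be discarded, so that the \emph{quotient} norm of $\hat g(\lambda)$ is $O(1/\Re\lambda)$ also on the right half-plane, is correct (the tail does lie in $C_{0,n}$, e.g.\ by Example \ref{exa 1}), and it lets you run Lemma \ref{lem prep gel} with $N=1$ to get a pole of order at most one. This is sharper than what the paper's proof yields with $N=n+1$ (only $a_m=0$ for $m\le -n-2$), and it is in fact the statement that Corollary \ref{cor 3.9} actually uses, so this refinement is a genuine improvement rather than a deviation.

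The genuine gap is in (ii). The paper's argument is short: the blow-up bounds of Lemma \ref{lem 3.5} near $i\R$ feed into a ``special maximum principle'' (Arendt--Batty--Hieber--Neubrander, Lemma 4.6.6) which bounds the entire function $\hat g$ on a vertical strip around $i\R$; the explicit estimates of Lemma \ref{lem 3.5} bound it off the strip and force decay as $|\Re\lambda|\to\infty$; Liouville then gives $\hat g\equiv 0$, hence $\tilde g=0$, i.e.\ $g\in C_{0,n}(\R^+,\X)$. Your primary route (Bromwich inversion of the orbit $t\mapsto\widetilde{g(t+\cdot)}$ and a leftward contour shift) stalls exactly where you say it does, and the repairs you list do not close it as described. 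Even for $\tilde g\in D(\tilde{\cal D}^2)$, the decomposition $F(\lambda)=\lambda^{-1}\tilde g+\lambda^{-2}\tilde{\cal D}\tilde g+\lambda^{-2}R(\lambda,\tilde{\cal D})\tilde{\cal D}^{2}\tilde g$ controls $F$ only off the imaginary axis: on the horizontal segments joining the two vertical lines the only available bound is of order $1/(|\Re\lambda|\,|\Im\lambda|^{2})$, which is not integrable across $\Re\lambda=0$, so the contour shift is still unjustified; one needs either a Newman--Korevaar type multiplier with finite contours, or precisely the uniform bound of the entire extension on a strip around $i\R$ that you defer (``once a polynomial bound for $F$ on that strip has been secured''). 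That strip bound is the whole point of the paper's appeal to the special maximum principle, so your ``alternative'' is the paper's proof with its key ingredient left unproved. A smaller omission in the same part: your claim that the orbit has constant quotient norm rests on the identity $\|\tilde h\|_n=\limsup_{t\to\infty}\|h(t)\|/(1+t)^n$, which is true but requires a short cutoff argument that should be supplied.
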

\begin{proof}
(i) By Lemma \ref{lem prep gel} we consider the Laurent expansion of $\hat g(\lambda ):=R(\lambda, \tilde{\cal D})\tilde{g}$ in a neighborhood of $i\xi_0$. Assuming that all notations are the same as in Lemma \ref{lem prep gel} with $N:=n+1$, we have  
\begin{eqnarray}
\| \sum_{j=0}^N \frac{N!}{j!(N-j)!}  {r^{2N-2j} } a_{k-2j}\| \le 2^NM r^{N-k}  .
\end{eqnarray}
Then, for all $N-k\ge 1$, that is $n\ge k$, $r^{N-k}\to 0$ as $r\to 0$. Consequently, $a_{k-2N}$, the only term without any positive power of $r$ in the left side must be zero, so
\begin{equation}
a_{k-2(n+1)}=0, \quad \mbox{whenever} \quad k\le n .
\end{equation}
This means, all coefficients $a_{-j-n-2}$ are zero for $j=0,1,2,...$. The only possible non-zero coefficients are $a_{-n-1},a_{-n},a_{-n+1},\cdots $. This means $i\xi_0$ is either removable singular point or a pole of order less than $n+1$.

\medskip
(ii) 
Before we prove the claim we recall that
$$
F(s)=e^s\frac{\Gamma (n+1,s)}{s^{n+1}}= \frac{n!}{s^{n+1}} \sum_{k=0}^n \frac{s ^k}{k!} 
$$ 
is the Laplace transform of  $f(t)=(1+t)^n$ for which
the following property is valid
\begin{equation}\label{inf be}
\lim_{s\to \pm \infty} \frac{e^s\Gamma (n+1,s)}{s^{n+1}} =0 .
\end{equation}
By a special maximum principle as proved in \cite[Lemma 4.6.6]{arebathieneu}, the entire function ${\hat g(\lambda)}$ is bounded on the strip $\{ z\in \C| \ |z|<\delta_0\} $ for any positive $\delta_0$. Moreover, by (\ref{inf be}) it is also bounded on the remaining strip $\{ z\in \C| \ |z| \ge \delta_0\}$ of the complex plane. By the Louville Theorem, ${\hat g(\lambda)}$ must be a constant. Again, by (\ref{inf be}), it must be zero. That is $g\in C_{0,n}(\R^+,\X)$.

\medskip
(iii) This is clear from the definition.
\end{proof}

\begin{corollary}\label{cor 3.9}
Let $g\in BUC_n  (\R^+,\X)$, and $i\xi_0$ be an isolated singular point of $\hat g(\lambda )$, where $\xi_0\in\R$, such that for each
\begin{equation}\label{3.22}
\lim_{\eta \downarrow 0}\eta  R(\eta +i\xi_0,\tilde{\cal D})\tilde{g} =0.
\end{equation}
Then, ${i\xi_0}$ is a removable singular point of $\hat g$.
\end{corollary}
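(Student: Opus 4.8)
The plan is to combine the finiteness of the principal part, guaranteed by Theorem \ref{the 3.8}(i), with the decay hypothesis (\ref{3.22}) read along the ray $\lambda=\eta+i\xi_0$, $\eta\downarrow 0$. All of the analysis takes place in the quotient Banach space $\Y$, where the resolvent $R(\cdot,\tilde{\cal D})$ and the function $\hat g(\lambda)=R(\lambda,\tilde{\cal D})\tilde g$ live, so no passage between $\Y$ and $BUC_n(\R^+,\X)$ is required.

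First, by Theorem \ref{the 3.8}(i) the isolated singularity $i\xi_0$ of $\hat g$ is either removable or a pole of order $m\le n+1$. In either case, in some punctured disc $0<|\lambda-i\xi_0|<\rho$ we may write a Laurent expansion with \emph{finite} principal part,
\[
\hat g(\lambda)=\sum_{k=-m}^{\infty} a_k(\lambda-i\xi_0)^k ,\qquad a_k\in\Y ,
\]
the coefficients being given by (\ref{laurent coeff}); if $m\ge 1$ we normalize so that $a_{-m}\ne 0$, and the claim to be proved is precisely that $m=0$.

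Next I would specialize to $\lambda=\eta+i\xi_0$ with $0<\eta<\rho$, multiply by $\eta$, and obtain in $\Y$
\[
\eta\,R(\eta+i\xi_0,\tilde{\cal D})\tilde g=\eta\,\hat g(\eta+i\xi_0)=\sum_{k=-m}^{\infty} a_k\,\eta^{k+1}.
\]
Since the principal part is finite, this series has a genuine leading term as $\eta\downarrow 0$: if $m\ge 2$, then $\|\eta\,\hat g(\eta+i\xi_0)\|\sim\|a_{-m}\|\,\eta^{1-m}\to\infty$, contradicting (\ref{3.22}); and if $m=1$, then $\eta\,\hat g(\eta+i\xi_0)\to a_{-1}$, so (\ref{3.22}) forces $a_{-1}=0$, again a contradiction. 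Hence $m=0$, the Laurent expansion has no principal part, and $i\xi_0$ is a removable singular point of $\hat g$.

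The only point requiring a little care — and the step I expect to be the main, if minor, obstacle — is the passage from "the limit along the single ray $\eta\downarrow 0$ vanishes" to "all negative Laurent coefficients vanish". This is exactly where the finiteness of the principal part from Theorem \ref{the 3.8}(i) is essential: a Laurent series with finite principal part, restricted to a ray emanating from the singularity, has a well-defined lowest-order term whose norm dominates the tail, so the one-sided limit already detects the top coefficient $a_{-m}$ (and, when $m=1$, equals $a_{-1}$). Everything else is a routine identification of $\hat g(\lambda)$ with $R(\lambda,\tilde{\cal D})\tilde g$ and bookkeeping of powers of $\eta$.
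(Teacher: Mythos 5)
Your proof is correct and follows essentially the same route as the paper: Laurent-expand $\hat g$ about $i\xi_0$ (finite principal part via Theorem \ref{the 3.8}(i)), restrict to $\lambda=\eta+i\xi_0$, multiply by $\eta$, and let $\eta\downarrow 0$ to kill the singular coefficients. The only difference is that the paper simply asserts the pole is simple by citing Theorem \ref{the 3.8}(i) (which in fact only bounds the order by $n+1$), whereas you rule out poles of order $m\ge 2$ directly from the blow-up of the leading term under hypothesis (\ref{3.22}); in that one step your argument is actually the more complete of the two.
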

\begin{proof}
By (i) of Theorem \ref{the 3.8}, $i\xi_0$ is a simple pole of $R(\lambda , \tilde{\cal D})\tilde{g}$, so
\begin{eqnarray*}
R(\lambda , \tilde{\cal D})\tilde{g} &=& \sum_{k=-1}^\infty a_k (\lambda -i\xi_0)^k 
\end{eqnarray*}
Therefore, for small $|\eta|$,
\begin{eqnarray*}
\eta R(\eta +i\xi_0 , \tilde{\cal D})\tilde{g} &=& \eta^j \sum_{k=-1}^\infty a_k \eta ^k \\
&=& a_{-1} + a_{0}\eta +a_{1}\eta^{2}+ \cdots 
\end{eqnarray*}
By (\ref{3.22}), $a_{-1}=0$, that means, $i\xi_0$ is a removable singular point of $\hat g$. 
\end{proof}

\section{Asymptotic behavior of solutions of fractional differential equations}
We are going to apply the spectral theory of $n $-bounded functions in the previous section to study the asymptotic behavior of mild solutions to fractional differential equations of the form
\begin{equation}\label{fde}
D^{\alpha}_Cu(t)=Au(t)+f(t), u(0)=x, 
\end{equation}
where $\alpha$ is a fixed number, $0<\alpha \le 1$, $A$ is a closed linear operator in a complex Banach space $\X$, $f$ is an element of $C_{0,n }(\R^+,\X)$. Recall that a mild solution $u$ on $\R^+$ is defined to be a continuous function $u$ on $\R^+$ such that, for each $t\in\R^+$, $J^\alpha u(t) \in D(A)$ and
\begin{equation}
u(t)=AJ^\alpha u(t)+J^\alpha f(t)+ x,
\end{equation}
for all $t\in \R^+$.
\subsection{Estimate of the spectrum of an $n $-bounded solution}
Below we will denote by
$\rho (A,\alpha)$
the set of all $\xi_0\in \C$ such that $(\xi_0^\alpha -A)$ has an inverse $(\xi_0^\alpha -A)^{-1}$ that is analytic in a neighborhood of $\xi_0$, and by $\Sigma (A,\alpha ):=\C \backslash \rho (A,\alpha).$

\medskip
We first asume that $\Re \lambda >0$. Then, for any $n $-bounded function $h$ by the proof of Lemma \ref{lem 3.5} we have
$$
\hat h (\lambda ) = (\lambda -\tilde{\cal D})^{-1}\tilde h = \tilde g,
$$
where
\begin{eqnarray*}
g(t) &=&\int^\infty_t e^{\lambda (t-s)} h(s)ds\\
&=& \int^\infty_0 e^{-\lambda \xi }h(t+\xi )d\xi .
\end{eqnarray*}
Therefore, for each $h\in BUC_n(\R^+,\X)$,
\begin{eqnarray*}
[\hat h(\lambda)](t) &=& \widetilde{ {\cal L}(S(t)h)(\lambda )}.
\end{eqnarray*}
 Next, for each $s\in \R^+$, let us denote $u_s(t):= u(t+s),f_s(t):=f(t+s)$ for all $t\ge 0$. Then, we have
\begin{eqnarray*}
u_s(t) = AJ^\alpha _s u_s(t ) +J^\alpha_s f_s(t) + u(s).
\end{eqnarray*}
Taking the Laplace transforms of both sides gives
\begin{eqnarray*}
{\cal L}u_s(\lambda) = \lambda^{-\alpha} A{\cal L} u_s(\lambda ) +\lambda^{-\alpha} {\cal L}f_s(\lambda ) + \lambda^{-1}u(s).
\end{eqnarray*}
Therefore, 
\begin{eqnarray*}
\lambda ^{1-\alpha} ( \lambda^{\alpha} -A) {\cal L} u_s(\lambda ) =\lambda^{1-\alpha} {\cal L}f_s(\lambda ) +  u(s).
\end{eqnarray*}
Next, for $\lambda$ in a neighborhood of a point $i \xi_0$ where $\xi_0 \in \rho(A,\alpha))$ and $\xi_0\not=0$,
\begin{eqnarray*}
{\cal L} u_s(\lambda ) = ( \lambda^{\alpha} -A)^{-1}     {\cal L}f_s(\lambda ) +\lambda ^{\alpha-1} ( \lambda^{\alpha} -A)^{-1} u(s).
\end{eqnarray*}
Recall that 
$$
{\cal L}(S(s)u)(\lambda ) = {\cal L}u_s(\lambda), {\cal L}(S(s)f)(\lambda ) = {\cal L}f_s(\lambda).
$$
Therefore, 
\begin{eqnarray*}
\hat u(\lambda)  = ( \lambda^{\alpha} -A)^{-1}   \hat f(\lambda )+ \lambda ^{\alpha-1} ( \lambda^{\alpha} -A)^{-1} \tilde u.
\end{eqnarray*}
As we assume that $f\in C_{0,n } (\R^+,\X)$, $\hat f(\lambda )=0$.
Hence, for $\Re\lambda >0$,
\begin{equation}\label{hatu}
\hat u(\lambda)  = \lambda ^{\alpha-1} ( \lambda^{\alpha} -A)^{-1} \tilde{u}.
\end{equation}
Below we introduce a new notation
\begin{equation}
R_\alpha (\lambda , A):= \lambda ^{\alpha-1} ( \lambda^{\alpha} -A)^{-1} .
\end{equation}
The lemma below is actually stated in \cite[Lemma 2.2]{arebat1}. For the reader's convenience we restate it in the following form with an adapted proof.
\begin{lemma}\label{lem 4.1}
Let $u\in BUC_n  (\R^+,\X)$, $\xi_0\in\R$, and let the function $G(\lambda )$ (in $\lambda$) be an analytic extension of the function $\hat u(\lambda) =(\lambda -\tilde{\cal D})^{-1}\tilde u$ with $\Re\lambda >0$ on the open disk $B( i\xi_0,r)$ with some positive $r$. Then, $G(\lambda ) =\hat u (\lambda) $ for $\Re \lambda <0$ on a disk $B(i\xi_0,r)$.
\end{lemma}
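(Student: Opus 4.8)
The plan is to prove that the two analytic functions $G(\lambda)$ (the given extension across $B(i\xi_0,r)$) and $\hat u(\lambda)$ (defined a priori only for $\Re\lambda\neq 0$, and in particular for $\Re\lambda<0$ by Lemma \ref{lem 3.5}) agree on the half-disk $B(i\xi_0,r)\cap\{\Re\lambda<0\}$. Since both are analytic on $B(i\xi_0,r)\cap\{\Re\lambda<0\}$ (which is a nonempty open connected set), it suffices by the identity theorem to show they coincide on some set with an accumulation point there, and in fact the cleanest route is to produce a single analytic function on all of $B(i\xi_0,r)$ that restricts to $\hat u$ on both the right and the left half-disks.

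\medskip
First I would fix a representative $u\in\tilde u$ and work with the concrete resolvent formulas from the proof of Lemma \ref{lem 3.5}. For $\Re\lambda>0$ a representative of $\hat u(\lambda)=(\lambda-\tilde{\cal D})^{-1}\tilde u$ is $g_\lambda^+(t):=\int_t^\infty e^{\lambda(t-s)}u(s)\,ds=\int_0^\infty e^{-\lambda\xi}u(t+\xi)\,d\xi$, while for $\Re\lambda<0$ a representative is $g_\lambda^-(t):=\int_0^t e^{\lambda(t-s)}u(s)\,ds$. The key elementary observation is that for a \emph{fixed} $t$ the difference $g_\lambda^+(t)-\bigl(-g_\lambda^-(t)\bigr)=\int_0^\infty e^{\lambda(t-s)}u(s)\,ds$ makes sense and is entire in $\lambda$ on a region where the integral converges — but that region depends on $t$, so this does not directly give an element of the quotient space. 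Instead I would use the hypothesis: on $B(i\xi_0,r)$ with $\Re\lambda>0$ we already know $\hat u(\lambda)=G(\lambda)$ in $\Y$, so for each such $\lambda$ the function $g_\lambda^+$ represents $G(\lambda)$. The idea is to differentiate (or rather, apply the functional equation): from $(\lambda-\tilde{\cal D})\hat u(\lambda)=\tilde u$ valid wherever $\hat u$ is defined, analytic continuation forces $(\lambda-\tilde{\cal D})G(\lambda)=\tilde u$ for \emph{all} $\lambda\in B(i\xi_0,r)$ for which $G(\lambda)\in D(\tilde{\cal D})$; one then checks, using that $\tilde{\cal D}$ is a closed operator on $\Y$ and that $G$ is analytic, that $G(\lambda)\in D(\tilde{\cal D})$ throughout $B(i\xi_0,r)$ and $(\lambda-\tilde{\cal D})G(\lambda)=\tilde u$ there. (Closedness of $\tilde{\cal D}$ follows since it is the part in the quotient of the generator ${\cal D}$ of a $C_0$-semigroup; alternatively, closedness on $B(i\xi_0,r)\cap\{\Re\lambda>0\}$ plus analyticity transfers by a Cauchy-integral/limit argument.)

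\medskip
Once we know $(\lambda-\tilde{\cal D})G(\lambda)=\tilde u$ on $B(i\xi_0,r)$, I would invoke injectivity of $(\lambda-\tilde{\cal D})$ for $\Re\lambda<0$ (which is exactly the content of Lemma \ref{lem 3.5}, giving $\lambda\in\rho(\tilde{\cal D})$): for $\lambda\in B(i\xi_0,r)$ with $\Re\lambda<0$ we have $(\lambda-\tilde{\cal D})G(\lambda)=\tilde u=(\lambda-\tilde{\cal D})\hat u(\lambda)$, hence applying the resolvent $(\lambda-\tilde{\cal D})^{-1}=R(\lambda,\tilde{\cal D})$ to both sides yields $G(\lambda)=\hat u(\lambda)$. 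This gives the conclusion on the left half-disk. The main obstacle, and the step I would spend the most care on, is justifying that $G(\lambda)$ lands in $D(\tilde{\cal D})$ on the \emph{whole} disk — i.e. that the equation $(\lambda-\tilde{\cal D})G(\lambda)=\tilde u$ persists across the imaginary axis. The right tool is the closedness of $\tilde{\cal D}$ together with the representation $\tilde{\cal D}G(\lambda)=\lambda G(\lambda)-\tilde u$: the right-hand side is analytic (hence continuous) in $\lambda$ on $B(i\xi_0,r)$, and on the dense-in-$B(i\xi_0,r)$ subset $\{\Re\lambda>0\}$ we have $G(\lambda)\in D(\tilde{\cal D})$ with $\tilde{\cal D}G(\lambda)$ equal to this analytic expression; taking limits $\lambda\to\lambda_0$ (with $\Re\lambda_0\le 0$) along $\Re\lambda>0$ and using that $\tilde{\cal D}$ is closed gives $G(\lambda_0)\in D(\tilde{\cal D})$ and $\tilde{\cal D}G(\lambda_0)=\lambda_0 G(\lambda_0)-\tilde u$, as needed. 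With that in hand the rest is immediate, and the lemma follows.
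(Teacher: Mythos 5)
Your overall strategy (show that the functional equation $(\lambda-\tilde{\cal D})G(\lambda)=\tilde u$ persists on the disk, then use that $\lambda\in\rho(\tilde{\cal D})$ for $\Re\lambda<0$ to conclude $G(\lambda)=R(\lambda,\tilde{\cal D})\tilde u$) is the right one, and your final injectivity step is fine. The genuine gap is in the bridging step: you assert that $\{\Re\lambda>0\}\cap B(i\xi_0,r)$ is dense in $B(i\xi_0,r)$ and take limits "along $\Re\lambda>0$" to points $\lambda_0$ with $\Re\lambda_0\le 0$. But if $\lambda_n\to\lambda_0$ with $\Re\lambda_n>0$, then necessarily $\Re\lambda_0\ge 0$; the open right half-disk is not dense in the full disk, and no point with $\Re\lambda_0<0$ is a limit of such a sequence. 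Hence your closedness argument only extends the identity $(\lambda-\tilde{\cal D})G(\lambda)=\tilde u$ to the closed right half-disk (in particular to the imaginary segment), and it never establishes $G(\lambda)\in D(\tilde{\cal D})$, let alone the functional equation, at the points $\Re\lambda<0$ --- which is precisely where you need it in the last step. Knowing the equation on the imaginary segment does not rescue the argument by the identity theorem either, since $\hat u(\lambda)=R(\lambda,\tilde{\cal D})\tilde u$ is not known to extend continuously to that segment from the left (its norm bound is $\|f\|_n/|\Re\lambda|$).

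The paper avoids this domain problem by regularizing with a fixed bounded resolvent: it considers $\lambda\mapsto(\lambda-\tilde{\cal D})R(1,\tilde{\cal D})G(\lambda)=G(\lambda)+(\lambda-1)R(1,\tilde{\cal D})G(\lambda)$, which is analytic on all of $B(i\xi_0,r)$ with no domain restriction, equals the constant $R(1,\tilde{\cal D})\tilde u$ on the right half-disk (by commutation of resolvents), hence equals that constant on the whole disk by uniqueness of analytic continuation; then for $\Re\lambda<0$ one applies $R(\lambda,\tilde{\cal D})$ and cancels the injective operator $R(1,\tilde{\cal D})$ to get $G(\lambda)=R(\lambda,\tilde{\cal D})\tilde u$. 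Your proof can be repaired in exactly this spirit without closedness-plus-limits: for a fixed $\mu$ in the left half-disk, the resolvent identity gives, for $\Re\lambda>0$ in the disk, $R(\mu,\tilde{\cal D})\tilde u=G(\lambda)+(\lambda-\mu)R(\mu,\tilde{\cal D})G(\lambda)$; the right-hand side is analytic on the whole disk, so the identity extends, and evaluating at $\lambda=\mu$ yields $G(\mu)=R(\mu,\tilde{\cal D})\tilde u$. As written, however, the proposal has a real gap at its central step.
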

\begin{proof}
In $B(i\xi_0,r)$ the function $\lambda \mapsto (\lambda -\tilde{\cal D}) R(1,\tilde{\cal D}) G(\lambda ) $ is an analytic function. By assumption, in $B(i\xi_0)$ for $\Re\lambda >0$,
\begin{eqnarray*}
 (\lambda -\tilde{\cal D}) R(1,\tilde{\cal D}) G(\lambda ) &=&(\lambda -\tilde{\cal D}) R(1,\tilde{\cal D}) R(\lambda , \tilde{\cal D})\tilde u\\
 &=&R(1,\tilde{\cal D}) (\lambda -\tilde{\cal D}) R(\lambda , \tilde{\cal D})\tilde u\\
 &=& R(1,\tilde{\cal D}) \tilde u .
 \end{eqnarray*}
 This yields that the function $\lambda \mapsto (\lambda -\tilde{\cal D}) R(1,\tilde{\cal D}) G(\lambda ) $ is a constant function $R(1,\tilde{\cal D}) \tilde u$ on the whole $B(i\xi_0,r)$.
Therefore, if $\Re\lambda <0$,
$$R(1,\tilde{\cal D}) G(\lambda ) = R(\lambda , \tilde{\cal D}) R(1,\tilde{D})\tilde u =R(1,\tilde{\cal D})R(\lambda , \tilde{\cal D}) \tilde u.$$
 Finally, for $\Re\lambda <0$, the above identity yields 
$$
G(\lambda ) = R(\lambda , \tilde{\cal D}) \tilde u .
$$
\end{proof}

\begin{corollary}\label{cor 4.2}
Let $u\in BUC_n  (\R^+,\X)$ be a mild solution of Eq.(\ref{fde}), where $f\in C_{0,n}(\R^+,\X)$. Then,
\begin{equation}
i\sigma_n  (u) \subset \Sigma (A,\alpha )\cap i\R .
\end{equation}
\end{corollary}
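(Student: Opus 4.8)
The aim is to prove the contrapositive form: if $\xi_0\in\R$ and $i\xi_0\notin\Sigma(A,\alpha)$, i.e. $i\xi_0\in\rho(A,\alpha)$, then $\xi_0\notin\sigma_n(u)$; taking complements in $\R$ then yields $i\sigma_n(u)\subset\Sigma(A,\alpha)\cap i\R$. So the plan is to fix such a $\xi_0$ and use the definition of $\rho(A,\alpha)$ to choose $r>0$ so small that $\mu\mapsto(\mu^\alpha-A)^{-1}$ is an analytic, bounded-operator-valued function on the disk $B(i\xi_0,r)$. Observe that $i\xi_0$ cannot lie on the cut $(-\infty,0]$ where $\mu\mapsto\mu^\alpha$ is undefined; when $\alpha<1$ this is automatic (so in particular $\xi_0\ne0$ whenever $\alpha<1$ and $i\xi_0\in\rho(A,\alpha)$), and when $\alpha=1$ the factor $\mu^{\alpha-1}$ is identically $1$. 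Shrinking $r$ if necessary, $\mu\mapsto\mu^{\alpha-1}$ is then analytic on $B(i\xi_0,r)$, hence so is $\mu\mapsto R_\alpha(\mu,A)=\mu^{\alpha-1}(\mu^\alpha-A)^{-1}$. Since each $R_\alpha(\mu,A)$ is a bounded operator on $\X$, it maps $BUC_n(\R^+,\X)$ into itself and $C_{0,n}(\R^+,\X)$ into itself, hence descends to a bounded operator on the quotient $\Y$; consequently $G(\lambda):=R_\alpha(\lambda,A)\tilde u$ is a well-defined analytic $\Y$-valued function on $B(i\xi_0,r)$.

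Next I would compare $G$ with $\hat u$. On $B(i\xi_0,r)\cap\{\Re\lambda>0\}$ the resolvent $(\lambda^\alpha-A)^{-1}$ exists, so formula (\ref{hatu}) applies and gives $\hat u(\lambda)=\lambda^{\alpha-1}(\lambda^\alpha-A)^{-1}\tilde u=G(\lambda)$ there — this is the only place where the hypothesis $f\in C_{0,n}(\R^+,\X)$ enters, namely through $\hat f(\lambda)=0$. Thus $G$ restricted to $B(i\xi_0,r)$ is precisely an analytic extension of the function $\hat u(\lambda)=(\lambda-\tilde{\cal D})^{-1}\tilde u$ defined for $\Re\lambda>0$, so Lemma \ref{lem 4.1} applies verbatim and tells us that $G(\lambda)=\hat u(\lambda)$ holds for $\Re\lambda<0$ as well (on a disk about $i\xi_0$). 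Since $\hat u$ is already analytic on $\C\setminus i\R$ by Lemma \ref{lem 3.5}, it follows that $G$ agrees with $\hat u$ on all of $B(i\xi_0,r)\setminus i\R$, so $G$ is a genuine analytic extension of $\hat u$ to a full neighborhood of $i\xi_0$. By the definition of $\sigma_n(u)$ this means $\xi_0\notin\sigma_n(u)$, which completes the argument; the case $\xi_0=0$, $\alpha<1$ needs no separate treatment because then $0\in\Sigma(A,\alpha)$ automatically.

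I do not expect a genuine obstacle here: all the analytic-continuation content has been front-loaded into the derivation of (\ref{hatu}) (which turns the mild-solution equation into a resolvent identity on the quotient space) and into Lemma \ref{lem 4.1} (which upgrades a one-sided extension to a two-sided one, this being the fractional analogue of the classical half-line argument). What remains is essentially bookkeeping: verifying that $R_\alpha(\mu,A)$ is analytic in $\mu$ as a map into the bounded operators on $\X$ and descends to $\Y$, and disposing of the mild nuisance caused by the branch point of $\mu\mapsto\mu^\alpha$ at the origin, which is handled by the observation that $\rho(A,\alpha)$ is automatically disjoint from $(-\infty,0]$ when $\alpha<1$.
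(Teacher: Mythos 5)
Your proposal is correct and follows essentially the same route as the paper: the inclusion is read off from formula (\ref{hatu}) (valid for $\Re\lambda>0$ because $\hat f(\lambda)=0$ when $f\in C_{0,n}(\R^+,\X)$) combined with Lemma \ref{lem 4.1}, which upgrades the one-sided extension to a full neighborhood of $i\xi_0$. You merely spell out details the paper leaves implicit, such as the quotient-space action of $R_\alpha(\lambda,A)$ and the branch-point issue at $\lambda=0$.
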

\begin{proof}
By Lemma \ref{lem 4.1} it suffices to find the set of the points $i\xi$ with $\xi \in \R$ such that $\hat u(\lambda )$ with $\Re\lambda >0$ has an analytic extension to a neighborhood of $i\xi$. By (\ref{hatu}) the corollary's claim is clear.
\end{proof}
\begin{definition}
A function $h\in BUC_n(\R^+,\X)$ is said to be {\it $n$-uniformly ergodic} at $i\eta$ if 
$$
M_\eta (h):= \lim_{\alpha \downarrow 0} \alpha R(\alpha +i\eta , {\cal D})h
$$
exists as an element of $ \in BUC_n(\R^+,\X).$
\end{definition}

\begin{theorem}\label{the main}
Let $u\in BUC_n  (\R^+,\X)$ be a mild solution to Eq.(\ref{fde}) with $f\in C_{0,n}(\R^+,\X)$. Assume further that
\begin{enumerate}
\item  $\Sigma (A,\alpha )\cap i\R$ is countable;
\item $u$ is $n$-uniformly ergodic at each $i\eta$  of this set, and $M_\eta (u) =0$.
\end{enumerate}
Then, 
\begin{equation}
\lim_{t\to+\infty} \frac{1}{(1+t)^n }u(t) =0.
\end{equation}
\end{theorem}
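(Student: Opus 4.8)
The plan is to prove that the spectrum $\sigma_n(u)$ is empty, after which Theorem \ref{the 3.8}(ii) yields $u\in C_{0,n}(\R^+,\X)$ --- which is exactly the claimed limit. The starting point is Corollary \ref{cor 4.2}, which gives $i\sigma_n(u)\subset\Sigma(A,\alpha)\cap i\R$; combined with hypothesis (i) this shows $\sigma_n(u)$ is countable, and by Theorem \ref{the 3.8}(iii) it is closed in $\R$.

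Next I would argue by contradiction: assume $\sigma_n(u)\neq\emptyset$. Since $\sigma_n(u)$ is closed in $\R$ it is a complete metric space in its own right, and it is nonempty and countable, so the Baire Category Theorem guarantees a point $\xi_0$ that is isolated within $\sigma_n(u)$. Because $\hat u(\lambda)$ is analytic off $i\R$ and, by the definition of $\sigma_n$, extends analytically across $i\xi$ for every $\xi\notin\sigma_n(u)$, the resulting maximal extension is analytic on a punctured neighborhood of $i\xi_0$; thus $i\xi_0$ is an isolated singular point of $\hat u$, and since $\xi_0\in\sigma_n(u)$ it is not removable.

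The remaining task is to contradict this non-removability using the ergodicity hypothesis. For $\Re\lambda>0$ the operator $R(\lambda,{\cal D})$ on $BUC_n(\R^+,\X)$ exists and maps the closed subspace $C_{0,n}(\R^+,\X)$ into itself (both facts sit inside the proof of Lemma \ref{lem 3.5}), so it passes to the quotient $\Y$, inducing there precisely $R(\lambda,\tilde{\cal D})$; hence the contractive linear quotient map $q: BUC_n(\R^+,\X)\to\Y$ intertwines, $q\,R(\lambda,{\cal D})=R(\lambda,\tilde{\cal D})\,q$. Denoting the ergodic parameter by $\varepsilon\downarrow 0$ (to avoid collision with the fractional order $\alpha$), hypothesis (ii) at $\eta=\xi_0$ reads $\varepsilon R(\varepsilon+i\xi_0,{\cal D})u\to 0$ in $BUC_n(\R^+,\X)$; applying $q$ and using its continuity gives $\varepsilon R(\varepsilon+i\xi_0,\tilde{\cal D})\tilde u\to 0$ in $\Y$, which is exactly condition (\ref{3.22}) of Corollary \ref{cor 3.9} with $g=u$. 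That corollary then forces $i\xi_0$ to be a removable singular point of $\hat u$, contradicting the previous step. Therefore $\sigma_n(u)=\emptyset$ and the theorem follows from Theorem \ref{the 3.8}(ii).

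I expect the main obstacle to be the packaging of these pieces rather than any single hard estimate: one must make sure the Baire category step genuinely supplies a point of $\sigma_n(u)$ isolated \emph{in} $\sigma_n(u)$ (so that Theorem \ref{the 3.8}(i) and Corollary \ref{cor 3.9} are applicable there), and that the ``zero mean'' information --- which the hypotheses record on $BUC_n(\R^+,\X)$ through ${\cal D}$ --- descends faithfully to the quotient operator $\tilde{\cal D}$ where the spectral machinery lives. The fractional structure of (\ref{fde}) is used only through Corollary \ref{cor 4.2}, which already encapsulates the identity (\ref{hatu}) valid for $\Re\lambda>0$; downstream of that, the argument is the polynomially weighted version of the classical ABLV scheme.
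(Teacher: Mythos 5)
Your proposal is correct and follows essentially the same route as the paper's own proof: Corollary \ref{cor 4.2} plus hypothesis (i) to get a countable closed $\sigma_n(u)$, an isolated point if nonempty, then Corollary \ref{cor 3.9} with the ergodic hypothesis to remove that singularity, a contradiction, and finally Theorem \ref{the 3.8}(ii). You merely make explicit two steps the paper leaves tacit --- the Baire category justification for the existence of an isolated point and the descent of the ergodic condition from $R(\lambda,{\cal D})$ on $BUC_n(\R^+,\X)$ to $R(\lambda,\tilde{\cal D})$ on the quotient $\Y$ --- both of which are sound.
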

\begin{proof}
By Corollary \ref{cor 4.2} $i\sigma_n  (u) \subset \Sigma (A,\alpha )\cap i\R$, so it is countable by our assumption. We claim further that $\sigma_n  (u)$ should be empty because of Condition (ii). In fact, as $i\sigma_n  (u)$ is at most countable and closed, if it is not empty it should have an isolated point, say $i\xi_0$. Therefore, $i\xi_0$ is an isolated singular point of $\hat u(\lambda )$. By Theorem \ref{the 3.8} $i\xi_0$ is a simple pole. However, by Condition (ii) and Corollary \ref{cor 3.9}, this simple pole is removable. That means, $i\xi_0$ is a regular point of $\hat u(\lambda )$, so $\xi_0\not\in \sigma_n  (u)$. This is a contradiction that proves that $\sigma_n  (u)$ is empty, so by Theorem \ref{the 3.8}, $u\in C_{0,n }(\R^+,\X)$. The theorem is proved.
\end{proof}

When $f=0$ we consider the well-posedness of the equation
\begin{equation}\label{eq:3}
D^{\alpha}_Cu(t)=Au(t), u(0)=x, 0<\alpha\le 1.
\end{equation}
A mild solution of (\ref{eq:3}) on an interval $\R^+$ is a continuous function $u$ on $J$ satisfying, for all $t\ge 0$, the integral equation
\begin{equation}\label{eq:2}
u(t)=x+\int_0^tg_\alpha(t-s)Au(s)ds.
\end{equation}
\begin{definition}
A family of operators $\{S_\alpha(t)\}_{t\geq 0}\subset L(\X) $ is called a resolvent operator of \ref{eq:1} if
\begin{itemize}
\item $\{S_\alpha(t)\}$ strongly continuous $t\geq 0$ and  $S_\alpha(0)=I$,
\item $S_\alpha(t)D(A)\subset D(A)$ and  $AS_\alpha(t)x=S_\alpha(t)A, x\in D(A), t\geq 0,$
\item  $S_\alpha(t)x$ is a solution of \eqref{eq:3} for all $x\in D(A)$.
\end{itemize}
\end{definition}
If Eq. (\ref{eq:3}) has a resolvent operator $S_\alpha (t)$, then (see \cite[Proposition 1.1]{pru}) every mild solution $u$ is of the form
$$
u(t)=S_\alpha (t)u(0).
$$

\begin{corollary}
Assume that Eq. (\ref{eq:3}) is well posed with a resolvent operator $(S_\alpha (t))_{t\ge 0})$ and the following conditions are satisfied
\begin{enumerate}
\item  $S_\alpha (t)$ satisfies
\begin{equation}
\sup_{t\ge 0} \frac{\| S_\alpha (t)\|}{(1+t)^n} <\infty ;
\end{equation}
\item $\Sigma (A,\alpha )\cap i\R$ is countable;
\item At each $i\zeta \in \Sigma (A,\alpha )\cap i\R$, $x\in \X$ 
\begin{equation}
\lim_{\eta\uparrow 0} \eta R_\alpha (\eta+i\zeta  , A)x =0.
\end{equation}
\end{enumerate}
Then, every mild solution $u(\cdot )=S_\alpha (\cdot )x_0\in BUC_n(\R^+,\X)$ of Eq. (\ref{eq:3}) satisfies
\begin{equation}
\lim_{t\to\infty} \frac{1}{(1+t)^n}\| u(t)\|  =0.
\end{equation}
\end{corollary}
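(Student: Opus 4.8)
The plan is to reduce the corollary directly to Theorem~\ref{the main}. Since Eq.~(\ref{eq:3}) is homogeneous, it is the special case of Eq.~(\ref{fde}) with $f\equiv 0$, and $0\in C_{0,n}(\R^+,\X)$ trivially, so the structural hypothesis on $f$ in Theorem~\ref{the main} is automatically met. The first step is to observe that the given mild solution $u(\cdot)=S_\alpha(\cdot)x_0$ lies in $BUC_n(\R^+,\X)$: membership in $BC_n(\R^+,\X)$ is exactly hypothesis (i) applied to the orbit $t\mapsto S_\alpha(t)x_0$, and $n$-uniform continuity follows from strong continuity of $(S_\alpha(t))_{t\ge 0}$ together with the growth bound in (i) by the same weighting trick used in Lemma~\ref{lem 3.1} and Example~\ref{exa 1} (estimate $\|u(t+h)-u(t)\|/(1+t)^n$ by splitting off the factor $(1+t+h)^n/(1+t)^n\le(1+h)^n$ and using uniform continuity of $S_\alpha(\cdot)x_0$ on compact sets near $0$, with a tail argument handled via the growth bound).

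The second step matches the two remaining hypotheses of Theorem~\ref{the main}. Condition (ii) of the corollary is literally condition (i) of the theorem. For condition (ii) of the theorem I must verify that at each $i\eta\in\Sigma(A,\alpha)\cap i\R$ the solution $u$ is $n$-uniformly ergodic with $M_\eta(u)=0$. The key identity is (\ref{hatu}): for $\Re\lambda>0$ one has $\hat u(\lambda)=R_\alpha(\lambda,A)\tilde u=\lambda^{\alpha-1}(\lambda^\alpha-A)^{-1}\tilde u$. Applying this representative-wise with $\tilde u$ replaced by the orbit through $x_0$, the quantity $\alpha R(\alpha+i\eta,\tilde{\cal D})\tilde u$ in the definition of $M_\eta(u)$ becomes (the class of) $\alpha\,R_\alpha(\alpha+i\eta,A)u$, and since $u(t)=S_\alpha(t)x_0$ while $S_\alpha$ commutes with the resolvent, this is $\alpha\,R_\alpha(\alpha+i\eta,A)x_0$ translated along the semigroup. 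Hypothesis (iii) of the corollary says exactly $\lim_{\eta\uparrow 0}\eta R_\alpha(\eta+i\zeta,A)x=0$ for all $x$; taking $x=x_0$ and $\zeta=\eta$ gives that the limit defining $M_\eta(u)$ exists and equals $0$ in $\X$, hence as the zero function in $BUC_n(\R^+,\X)$. Thus both hypotheses of Theorem~\ref{the main} hold.

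The third and final step is simply to invoke Theorem~\ref{the main}, which yields $\lim_{t\to\infty}(1+t)^{-n}u(t)=0$, i.e. $\lim_{t\to\infty}(1+t)^{-n}\|u(t)\|=0$, as claimed.

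I expect the main obstacle to be the bookkeeping in the second step: one must be careful that the operator-level ergodic limit ``$\lim_{\eta\uparrow 0}\eta R_\alpha(\eta+i\zeta,A)x_0=0$'' translates faithfully into the function-space ergodic condition ``$\lim_{\eta\downarrow 0}\eta R(\eta+i\zeta,{\cal D})u=0$ in $BUC_n(\R^+,\X)$'', since the latter is a limit of functions, not of vectors. The passage goes through the identity $[\hat u(\lambda)](t)=\widetilde{{\cal L}(S(t)u)(\lambda)}$ established before Corollary~\ref{cor 4.2} together with (\ref{hatu}): evaluating at $t$ and using $S(t)u=S_\alpha(t+\cdot)x_0$ and the commutation $AS_\alpha(t)=S_\alpha(t)A$ shows $[\eta R(\eta+i\zeta,\tilde{\cal D})\tilde u](t)$ is the class of $s\mapsto S_\alpha(s)\big(\eta R_\alpha(\eta+i\zeta,A)x_0\big)$, whose $\|\cdot\|_n$-norm is controlled by $(1+s)$-growth of $S_\alpha$ from hypothesis (i) times $\|\eta R_\alpha(\eta+i\zeta,A)x_0\|\to 0$; one should also note that a direct uniform estimate on $\|\eta R_\alpha(\eta+i\zeta,{\cal D})u\|_n$ is available from Lemma~\ref{lem 3.5} (the $\Re\lambda>0$ bound is $O(1)$ near the imaginary axis), so the convergence is genuinely uniform in the weighted sup-norm. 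Everything else is routine.
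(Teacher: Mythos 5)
Your proof is correct in its essentials and follows the same route as the paper: the paper's own proof simply notes that only the $n$-uniform ergodicity condition of Theorem \ref{the main} needs checking (conditions on countability of $\Sigma(A,\alpha)\cap i\R$ and on $f=0\in C_{0,n}(\R^+,\X)$ being immediate), and verifies it exactly as you do, via the identity (\ref{hatu}) and the commutation of $S_\alpha(t)$ with $R_\alpha(\lambda,A)$, which give $\|\eta R(\eta+i\zeta,\tilde{\cal D})\tilde u\|_n \le \|S_\alpha(\cdot)\|_n\,\|\eta R_\alpha(\eta+i\zeta,A)x_0\|\to 0$. Two side remarks in your write-up are inaccurate, though neither is needed: the membership $u\in BUC_n(\R^+,\X)$ is part of the corollary's hypothesis and cannot be derived from condition (i) plus strong continuity alone, since $(S_\alpha(t))_{t\ge 0}$ is a resolvent family rather than a semigroup, so there is no algebraic identity transferring continuity near $0$ into $n$-uniform continuity in $t$; and the bound of Lemma \ref{lem 3.5} for $\Re\lambda>0$ is not $O(1)$ near the imaginary axis --- it grows like $n!\,(\Re\lambda)^{-(n+1)}$ as $\Re\lambda\downarrow 0$ --- but your actual estimate through hypotheses (i) and (iii) does not rely on it.
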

\begin{proof}
It suffices to check the uniform ergodicity condition in Theorem \ref{the main}. By (\ref{hatu})  we have
\begin{eqnarray*}
0\le \lim_{\eta \uparrow 0} \| \eta R (\eta +i\zeta , \tilde{\cal D}) \tilde u \|_n &=& \lim_{\uparrow 0} \|  \eta \hat u (\eta +i\zeta )\|_n  \\
&=& \lim_{\eta \uparrow 0} \| \eta R_\alpha (\eta +i\zeta ,A) \widetilde {S_\alpha (\cdot )x_0}\| _n\\
&\le & \| S_\alpha (\cdot )\| _n \| \lim_{\eta \uparrow 0} \| \eta R_\alpha (\eta +i\zeta ,A) x_0\| \\
&=& 0.
\end{eqnarray*}
\end{proof}

\bibliographystyle{amsplain}

\begin{thebibliography}{10}

\bibitem{arebat1}
W. Arendt, C. J.K. Batty. Almost periodic solutions of first- and second-order Cauchy problems. {\it J. Differential Equations}, {\bf 137} (1997), no. 2, 363-383.

\bibitem{arebat}
W. Arendt, C. J.K. Batty. Tauberian theorems and stability of one-parameter semigroups. 
{\it Trans. Amer. Math. Soc.} {\bf 306} (1988), 837-852.


\bibitem{arebathieneu}
W. Arendt, C.J.K  Batty, M. Hieber, F. Neubrander, Vector-valued Laplace transforms and Cauchy problems. Second edition. Monographs in Mathematics, 96. Birkhauser/Springer Basel AG, Basel, 2011. 

\bibitem{baemeenan}
B. Baeumer, M.M. Meerschaert, and E. Nane, Brownian subordinators and fractional
Cauchy problems, {\it Trans. Am. Math. Soc.}, {\bf 361} (2009), pp. 3915-3930.

\bibitem{bas}
A. G. Baskakov,  Harmonic and spectral analysis of power bounded operators and bounded semigroups of operators on a Banach space. (Russian) {\it Mat. Zametki} {\bf 97} (2015), no. 2, 174--190; translation in {\it Math. Notes} {\bf 97} (2015), no. 1-2, 164-178


\bibitem{batengprusch}
A. Batkai, K.J. Engel, J. Pr\"uss, R. Schnaubelt, 
Polynomial stability of operator semigroups. {\it 
Math. Nachr.} {\bf 279} (2006), no. 13-14, 1425-1440.

\bibitem{batneerab1}
C. J. K. Batty, J. van Neerven, F. R\"abiger,   Tauberian theorems and stability of solutions of the Cauchy problem. {\it Trans. Amer. Math. Soc.} {\bf 350} (1998), no. 5, 2087-2103. 

\bibitem{batneerab2}
C. J. K. Batty, J. van Neerven, F. R\"abiger, Local spectra and individual stability of uniformly bounded C0-semigroups. {\it Trans. Amer. Math. Soc.} {\bf 350} (1998), no. 5, 2071-2085.

\bibitem{batyea}
C.J.K. Batty, S.B. Yeates, Stephen ,
Weighted and local stability of semigroups of operators. {\it 
Math. Proc. Cambridge Philos. Soc.} {\bf  129} (2000), no. 1, 85-98.

\bibitem{baspry}
B. Basit, A.J. Pryde, Ergodicity and stability of orbits of unbounded semigroup representations. {\it J. Aust. Math. Soc.} {\bf 77} (2004), no. 2, 209-232.

\bibitem{baz}
E. Bazhlekova, "Fractional Evolution Equations in Banach Spaces".Thesis, Technische
Universiteit Eindhoven, 2001.

\bibitem{clegrilon}
Ph. Clement, G. Gripenberg, and S.-O. Londen, Schauder estimates for equations with
fractional derivatives. {\it Trans. Am. Math. Soc.} {\bf 352} (2000), pp. 2239?2260.

\bibitem{chitom1}
R. Chill, R., Yu. Tomilov,  Stability of operator semigroups: ideas and results. In: "Perspectives in operator theory", vol. {\bf 75}, pp.71-109. Banach Center Publ. Polish Acad. Sci., Warsaw, 2007.


\bibitem{cue}
E. Cuesta, Asymptotic behaviour of the solutions of fractional integro-differential equations
and some time discretizations, {\it Discrete Contin. Dyn. Syst.} (Suppl.) (2007), pp. 277-285.

\bibitem{eidkoc}
S.D. Eidelman and A.N. Kochubei, Cauchy problem for fractional diffusion equations,
{\it J. Differential. Equations} , {\bf199}  (2004), pp. 211-255.

\bibitem{hil}
R. Hilfer, "Applications of Factional Calculus in Physics". World Scientific, River Edge, NJ,
2000.

\bibitem{hinnaiminshi}
Y. Hino, T. Naito,  Nguyen Van Minh; J.S. Shin,  "Almost periodic solutions of differential equations in Banach spaces". Stability and Control: Theory, Methods and Applications, 15. Taylor \& Francis, London, 2002.


\bibitem{keylizwar}
V. Keyantuo, C. Lizama, Carlos;M.  Warma, Existence, regularity and representation of solutions of time fractional wave equations. {\it Electron. J. Differential Equations}, 2017, Paper No. 222, 42 pp.

\bibitem{kilsritru}
A.A. Kilbas, H.M. Srivastsava, and J.J. Trujillo, "Theory and Applications of Fractional
Differential Equations", North-Holland Mathematics Studies, Vol. {\bf 204}, Elsevier Science,
Amsterdam, 2006.

\bibitem{lizngu}
C.  Lizama, G. M. N'Guerekata, Mild solutions for abstract fractional
differential equations, {\it Applicable Analysis}, {\bf 92} (2013), 1731-1754.

\bibitem{lizngu2}
C.  Lizama, G. M. N'Guerekata, Bounded mild solutions for semilinear integro
differential equations in Banach spaces, {\it Integr. Eqns Oper. Theory}, {\bf  68} (2010), pp. 207-227.

\bibitem{lyuvu}
Yu. Lyubich,   Vu Quoc Phong, 
Asymptotic stability of linear differential equations in Banach spaces. 
{\it Studia Math.}, {\bf 88} (1988), no. 1, 37-42. 

\bibitem{min} Nguyen Van Minh, A Spectral Theory of Non-Uniformly Continuous Functions and
the Loomis-Arendt-Batty-Vu Theory on the Asymptotic Behavior of Solutions of Evolution Equations. {\it J. Differential Equations}, {\bf 247} (2009), p.1249-1274.

\bibitem{minngusie}
Nguyen, Van Minh; G. N'Guerekata, S. Siegmund,  Circular spectrum and bounded solutions of periodic evolution equations. {\it J. Differential Equations} {\bf 246} (2009), no. 8, 3089-3108.

\bibitem{nee} J. van Neerven, "The Asymptotic Behaviour of
Semigroups of Linear Operators". Operator Theory: Advances and Applications, {\bf 88}. Birkhauser Verlag, Basel, 1996. 

\bibitem{pau}
L. Paunonen, Polynomial stability of semigroups generated by operator matrices. {\it J. Evol. Equ.} {\bf 14} (2014), no. 4-5, 885-911.

\bibitem{pru}
 J. Pr\"uss,
  "Evolutionary integral equations and applications". Monographs in Mathematics, {\bf 87}. Birkhauser Verlag, Basel, 1993.
  
  \bibitem{rozseista}
  J. Rozendaal, D. Seifert, David; R. Stahn,  Optimal rates of decay for operator semigroups on Hilbert spaces. {\it Adv. Math.} {\bf 346} (2019), 359-388.
  
  \bibitem{Rzesch}
  L. Rzepnicki, R. Schnaubelt,  Polynomial stability for a system of coupled strings. {\it Bull. Lond. Math. Soc.} {\bf 50} (2018), no. 6, 1117-1136.

\bibitem{sklshi}
G.M. Sklyar, V. Ya. Shirman, Asymptotic stability of a linear differential equation in a Banach space. (Russian) Teor. Funktsii Funktsional. Anal. i Prilozhen. No. 37 (1982), 127-132.

 \bibitem{yos}
  T. Yosida, "Functional Analysis". Springer-Verlag, Berlin, 1995.
  
\end{thebibliography}

\end{document}